\renewcommand{\th}{^\mathrm{th}}
\journalname{Graphs and Combinatorics}
\begin{document}

\title{Packing tree degree sequences
}


\author{Aravind Gollakota        \and
           William Hardt \and
           Istv\'an Mikl\'os
}


\institute{A. Gollakota \at
              Cornell University \\
              144 East Avenue \\
              Ithaca, NY, USA 14853 \\
              \email{apg77@cornell.edu}           
           \and
           W. Hardt \at
              Carleton College \\ 300 North College Street \\ Northfield, MN, USA 55057 \\ \email{hardtw@carleton.edu}
           \and
            I. Mikl\'os \at
            R\'enyi Institute\\1053 Budapest, Re\'altanoda u. 13-15\\Hungary\\
             \email{miklos.istvan@renyi.mta.hu}\\
            \emph{Secondary affiliation:} SZTAKI\\1111 Budapest, L\'agym\'anyosi u. 11\\Hungary
}

\date{Received: date / Accepted: date}

\maketitle

\begin{abstract}
We consider packing tree degree sequences in this paper. We set up a conjecture that any arbitrary number of tree degree sequences without common leaves have edge disjoint tree realizations. This conjecture is known to be true for $2$ and $3$ tree degree sequences. In this paper, we give a proof for $4$ tree degree sequences and a computer aided proof for $5$ tree degree sequences. We also prove that for arbitrary $k$, $k$ tree degree sequences without common leaves and at least $2k-4$ vertices which are not leaves in any of the trees always have edge disjoint tree realizations. The main ingredient in all of the presented proofs is to find rainbow matchings in certain configurations.
\keywords{Degree constrained edge partitioning \and Tree sequence packing keyword \and Rainbow matching}
 \subclass{05C05 \and 05C07 \and 05C70}
\end{abstract}

\section{Introduction}
The colored degree matrix problem, \cite{hmcd2016} also known as finding edge packing \cite{bushetal2012}, edge disjoint realizations \cite{guinezetal2011} or degree constrained edge partitioning \cite{bentzetal2009}, asks if a series of degree sequences have edge disjoint realizations. The general problem is known to be NP-complete \cite{guinezetal2011}, but certain special cases are easy. These special cases include the case when one of the degree sequences is almost regular and there are only two degree sequences \cite{kundureg}, or, equivalently, when the sum of two degree sequences is almost regular \cite{chen1988}, or when the degrees are sparse \cite{bentzetal2009,hmcd2016}. Kundu proved that two degree sequences of trees have edge disjoint tree realizations if and only if their sum is graphical \cite{kundutree}. On the other hand, this is not true of three such sequences: there exist $3$ degree sequences of trees such that any couple of them have a sum which is graphical; furthermore, the sum of the $3$ is still graphical, and they do not have edge disjoint tree realizations \cite{kundu3tree}. However, $3$ degree sequences of trees do have edge disjoint tree realizations when the smallest sum of the degrees is $5$ \cite{kundu3tree}. This minimal degree condition includes the case when the degree sequences have no common leaves.

It is easy to see that the sum of two degree sequences of trees is always graphical if they do not have common leaves. Therefore $k$ degree sequences of trees always have edge disjoint tree realizations if they do not have common leaves for $k=2, 3$. A natural question is to ask if this statement is true for arbitrary $k$. In this paper we conjecture that it is true for arbitrary $k$, and prove it for $4$ degree sequences of trees.
For $5$ degree sequences of trees, we prove that the conjecture is true if it is true up to $18$ vertices. Computer aided search then confirmed that it is indeed true up to $18$ vertices.
We also prove the conjecture for arbitrary $k$ in a special case, 
when there are a prescribed number of vertices which are not leaves in any of the degree sequences. All the presented proofs are based on induction, and the key point in the inductive steps is to find rainbow matchings in certain configurations. 

\section{Preliminaries}
In this section, we give the necessary definitions and notation, as well as
state the conjecture that we prove for some special cases.

\begin{definition}
\begin{sloppypar}
A \emph{degree sequence} is a list of non-negative integers, ${D = d_1, d_2, \ldots d_n}$. A degree sequence $D$ is \emph{graphical} if there exists a graph $G$ whose degrees are exactly $D$. Such a graph is a \emph{realization} of $D$.
A degree sequence $D = d_1,d_1\ldots d_n$ is a \emph{tree degree sequence} if all degrees are positive and $\sum_{i=1}^{n} d_i = 2n-2$. A degree sequence is a \emph{path degree sequence} if two of its degrees are $1$ and all other degrees are $2$.
\end{sloppypar}
\end{definition}
It is easy to see that a tree degree sequence is always graphical and there is a tree realization of it.
\begin{definition}
A \emph{degree matrix} is a matrix of non-negative integers. A degree matrix $M$ of dimension $k \times n$ is \emph{graphical} if there exists a series of edge disjoint graphs, $G_1, G_2, \ldots, G_k$ such that for each $i$, $G_i$ is a realization of the degree sequence in the $i{\th}$ row. Such a series of graphs is a \emph{realization} of $M$. Alternatively, an edge colored simple graph is also called a realization of $M$ if it is colored with $k$ colors and for each color $c_i$, the subgraph containing the edges with color $c_i$ is $G_i$. The degree matrix might also be defined by its rows, which are degree sequences $D_1, D_2, \ldots D_k$. The degree of vertex $v$ in degree sequence $D_i$ is denoted by $d_v^{(i)}$.
\end{definition}

In this paper, we consider the following conjecture.
\begin{conjecture}\label{conj:main}
Let $D_1, D_2, \ldots, D_k$ be tree degree sequences without common leaves, that is, for any $v$ and $i$, $d_v^{(i)} = 1$ implies that for all $j\ne i$, $d_v^{(j)} >1$. Then they have edge disjoint realizations.
\end{conjecture}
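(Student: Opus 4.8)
The plan is to take Kundu's theorems, which settle $k\le 3$, as known input and to induct on the number of vertices $n$ for each fixed $k\ge 4$. First I would record two structural facts. Because no vertex can be a leaf in two of the sequences, the leaf-incidences are carried by distinct vertices, so there are at least $2k$ vertices that are leaves in some tree, whence $n\ge 2k$; moreover, in the extremal case $n=2k$ every tree has exactly two leaves, i.e.\ every $D_i$ is a path degree sequence, and the claim reduces to packing $k$ edge-disjoint Hamiltonian paths on $2k$ vertices, which I would dispose of by an explicit construction (this is also the regime a finite check settles for fixed small $k$). The inductive step should reduce an instance on $n$ vertices to one on $n-1$ vertices.

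For the reduction I would delete a single vertex $v$ from all $k$ sequences at once. Deleting a vertex of degree $d$ from a tree and reconnecting the $d$ resulting pieces into a tree on the remaining $n-1$ vertices lowers the degree sum by exactly $2$, so each reduced sequence $D_i'$ is again a tree degree sequence on $n-1$ vertices; after checking that $v$ can be chosen so that $D_1',\dots,D_k'$ still have no common leaves, the induction hypothesis yields edge-disjoint realizations $T_1',\dots,T_k'$. It then remains to reinsert $v$: in each color $i$ I must attach $v$ to $d_v^{(i)}$ vertices so that the result is again a tree of the prescribed degrees, the new edges at $v$ have distinct far endpoints across the colors (this is exactly edge-disjointness at $v$), and the reinsertion is compatible with $T_i'$. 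When $d_v^{(i)}=1$ this only asks for one free neighbour, and when $d_v^{(i)}=2$ it asks to subdivide one edge of $T_i'$; the disjointness requirement is then precisely that one can choose one edge of each $T_i'$, colored by $i$, forming a matching, that is, a \emph{rainbow matching} in $\bigcup_i T_i'$.

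Hence the inductive step succeeds exactly when the associated edge-colored configuration admits a rainbow matching of the required size, and this is the step I expect to be the main obstacle. Two things make it delicate. First, one must choose $v$ (ideally a leaf of low total degree, which exists by averaging since $\sum_{v,i} d_v^{(i)} = k(2n-2)$) so that the per-color degrees $d_v^{(i)}$ are small enough that reinsertion is governed by matchings rather than by attaching a high-degree vertex along a whole subtree, for which the configuration is genuinely more complicated. Second, one needs enough vertices that are internal in every tree to serve as safe attachment points, so that the rainbow matching exists and no new common leaves are created; this is where I expect the hypothesis of the special case — at least $2k-4$ vertices that are leaves in none of the trees — to enter, as it guarantees the room to realize the matching for every $k$. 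Without such a guarantee the existence of the rainbow matching is not automatic, which is why I would expect to push the argument through unconditionally only for small $k$ (matching the treatments of $k=4$ and, via finite checking, $k=5$) rather than for all $k$ simultaneously.
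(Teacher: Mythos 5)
Your plan is essentially the paper's own strategy: induction on $n$ with Hamiltonian-path packings and finitely many small cases as the base, a reduction that deletes a vertex which is a leaf in one sequence and has degree $2$ in all others (compensating by decrementing a degree $>2$ elsewhere in that sequence), and reinsertion of the deleted vertex by subdividing the edges of a rainbow matching in the remaining $k-1$ trees. The statement is labelled a conjecture precisely because the rainbow-matching step cannot yet be guaranteed for all $k$ and $n$; like the paper, your outline closes the argument only for small $k$ or under the extra hypothesis of at least $2k-4$ never-leaves, and you correctly identify this as the obstruction rather than claiming a full proof.
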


A trivially necessary condition for a series of degree sequences be graphical is that the sum of the degree sequences is graphical. Therefore, Conjecture~\ref{conj:main} implies that sum of tree degree sequences without common leaves is always graphical. This implication is in fact true, and is proven below. Before proving it, we also prove a lemma which is interesting on its own.

\begin{lemma}\label{lem:2k-eg}
Let $F = f_1 \ge f_2 \ge \ldots \ge f_n$ be the sum of $k$ arbitrary tree degree sequences. Then the Erd{\H o}s-Gallai inequality 
\begin{equation}
\sum_{i = 1}^s f_i \le s(s-1) + \sum_{j=s+1}^n \min\{s, f_j\}
\end{equation}
holds for any $s \ge 2k$.
\end{lemma}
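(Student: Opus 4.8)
The plan is to prove the inequality by crude counting, using only two properties of tree degree sequences: each has all positive entries, and each sums to $2n-2$. From the first property, every coordinate of the sum satisfies $f_v = \sum_{i=1}^k d_v^{(i)} \ge k$, since each of the $k$ trees contributes at least $1$ at $v$. From the second, the total weight is $\sum_{v=1}^n f_v = k(2n-2) = 2k(n-1)$. These two observations are the only facts about trees I will need.

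First I would bound the left-hand side from above. Writing $\sum_{i=1}^s f_i = 2k(n-1) - \sum_{j=s+1}^n f_j$ and using $f_j \ge k$ on each of the $n-s$ tail terms gives $\sum_{i=1}^s f_i \le 2k(n-1) - k(n-s) = k(n+s-2)$. Next I would bound the right-hand side from below. Because $s \ge 2k \ge k$ and each $f_j \ge k$, every term satisfies $\min\{s,f_j\} \ge k$, so $s(s-1) + \sum_{j=s+1}^n \min\{s,f_j\} \ge s(s-1) + k(n-s)$.

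It then remains to verify $k(n+s-2) \le s(s-1) + k(n-s)$. Cancelling $kn$ and rearranging, this is equivalent to $s^2 - (2k+1)s + 2k \ge 0$, i.e.\ $(s-1)(s-2k) \ge 0$, which holds precisely when $s \ge 2k$, both factors being nonnegative there. This is also what makes $2k$ the natural threshold: it is exactly the larger root of the quadratic produced by these two estimates. I do not expect a genuine obstacle; the only points requiring care are keeping the inequality directions consistent between the upper and lower bounds, and observing that the crude tail estimate $f_j \ge k$ is adequate because the total surplus above the minimum, $2k(n-1) - kn = k(n-2)$, is precisely the amount the $s(s-1)$ term can absorb once $s$ is large enough. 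The boundary case $s = n$ is covered automatically, with all tail sums empty.
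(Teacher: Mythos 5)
Your proof is correct and follows essentially the same route as the paper's: bound the head sum above by $k(2n-2)-k(n-s)$ using the total degree sum and the pointwise lower bound $f_j\ge k$, bound the right-hand side below by $s(s-1)+k(n-s)$, and reduce to $2k(s-1)\le s(s-1)$, i.e.\ $(s-1)(s-2k)\ge 0$. The only difference is cosmetic (you factor the final quadratic explicitly and remark on the threshold), so there is nothing to add.
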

\begin{proof}
 For any sum of $k$ tree degree sequences,
\begin{equation}
\sum_{i = 1}^s f_i \le k(2n-2) - (n-s)k,
\end{equation}
since each tree degree sequence has a sum $2n-2$ and the minimum sum is $k$ on any vertex. Furthermore if $s \ge 2k$, then
\begin{equation}
s(s-1) + (n-s)k \le s(s-1) + \sum_{j=s+1}^n \min\{s,f_j\}.
\end{equation}
Therefore, it is sufficient to prove that
\begin{equation}
k(2n-2) - (n-s)k \le s(s-1) + (n-s)k. 
\end{equation}
Rearanging this, we get that
\begin{equation}
2k(s-1) \le s(s-1),
\end{equation}
which is true when $s\ge 1$ and $2k \le s$.
\qed
\end{proof}

We use this lemma to prove the following theorem -- now on tree degree sequences without common leaves.

\begin{theorem}
Let $D_1, D_2, \ldots, D_k$ be tree degree sequences without common leaves. Then their sum is graphical.
\end{theorem}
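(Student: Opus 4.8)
The plan is to verify the two Erd\H os--Gallai conditions for the graphicality of $F = f_1 \ge f_2 \ge \ldots \ge f_n$: that $\sum_i f_i$ is even, and that the inequality in Lemma~\ref{lem:2k-eg} holds for \emph{every} $1 \le s \le n$. Parity is immediate, since each tree degree sequence sums to $2n-2$ and hence $\sum_i f_i = k(2n-2)$ is even. For the Erd\H os--Gallai inequalities, Lemma~\ref{lem:2k-eg} already disposes of all $s \ge 2k$, so the entire problem reduces to the range $1 \le s \le 2k-1$.

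Before treating that range I would extract two consequences of the no-common-leaves hypothesis. First, every vertex is a leaf in at most one of the trees, so $f_v = \sum_i d_v^{(i)} \ge 1 + 2(k-1) = 2k-1$ for each $v$ (and $f_v \ge 2k$ when $v$ is a leaf in no tree). Second, every tree on $n \ge 2$ vertices has at least two leaves, and since no vertex is a leaf in two different trees, the $k$ trees contribute at least $2k$ \emph{distinct} leaf vertices; hence $n \ge 2k$. These are the only structural facts I expect to need.

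With these in hand the range $1 \le s \le 2k-1$ becomes a short computation. Since $f_j \ge 2k-1 \ge s$ for every $j$, we have $\min\{s, f_j\} = s$, so the right-hand side collapses to $s(s-1) + (n-s)s = s(n-1)$. To bound the left-hand side I would pass to the complement: using $\sum_{i=1}^n f_i = 2k(n-1)$ together with the bound $f_v \ge 2k-1$ on the $n-s$ smallest coordinates,
\begin{equation}
\sum_{i=1}^s f_i = 2k(n-1) - \sum_{i=s+1}^n f_i \le 2k(n-1) - (n-s)(2k-1).
\end{equation}
It then suffices to check $2k(n-1) - (n-s)(2k-1) \le s(n-1)$, which rearranges to $(2k-s)(n-1) \le (n-s)(2k-1)$; the difference of the two sides is exactly $(s-1)(n-2k) \ge 0$, true because $s \ge 1$ and $n \ge 2k$. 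This closes the remaining range and finishes the proof.

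The step I expect to be the main obstacle is bounding the partial sum $\sum_{i=1}^s f_i$ for small $s$. The naive route of bounding each tree's contribution to the top $s$ coordinates (at most $n+s-2$ per tree, hence $\sum_{i=1}^s f_i \le k(n+s-2)$) is too lossy and fails for small $s$ and large $n$. The trick that rescues the argument is to bound the partial sum \emph{through its complement}, where the uniform lower bound $f_v \ge 2k-1$ is precisely strong enough; the whole inequality then hinges on $n \ge 2k$, which is the one genuinely structural fact that must be isolated from the no-common-leaves hypothesis.
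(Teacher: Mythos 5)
Your proof is correct, and its skeleton coincides with the paper's: both invoke the Erd\H os--Gallai criterion, dispose of $s \ge 2k$ via Lemma~\ref{lem:2k-eg}, and observe that for $s \le 2k-1$ the no-common-leaves condition forces $f_j \ge 2k-1 \ge s$, collapsing the right-hand side to $s(n-1)$. The one place you genuinely diverge is the final bound on $\sum_{i=1}^s f_i$. The paper proves the pointwise upper bound $f_v \le n-1$ for every vertex, by noting that $d_v^{(i)} = l$ forces at least $l$ leaves of $T_i$ distinct from $v$, and that leaves of distinct trees are distinct vertices; the partial-sum bound $\sum_{i=1}^s f_i \le s(n-1)$ is then immediate. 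You instead bound the partial sum through its complement, using only the \emph{lower} bound $f_v \ge 2k-1$ on the remaining $n-s$ coordinates together with $n \ge 2k$, and verify $(s-1)(n-2k) \ge 0$. Both arguments are sound (your algebra checks out: the difference of the two sides is exactly $(s-1)(n-2k)$, and $n \ge 2k$ follows from each tree having two leaves with no leaf shared). The paper's route yields the slightly stronger and reusable structural fact $f_v \le n-1$; yours is arguably more mechanical, needing only the two lower bounds you isolate at the outset, and it also makes the parity check explicit, which the paper leaves tacit.
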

\begin{proof}
Let $F = f_1, f_2, \ldots, f_n$ denote the sum of the degrees in decreasing order. We use the Erd{\H o}s-Gallai theorem \cite{eg1960}, which says that a degree sequence $F$ in decreasing order is graphical if and only if for all $s$ with $1 \leq s \leq n$,
\begin{equation}
\sum_{i = 1}^s f_i \le s(s-1) + \sum_{j=s+1}^n \min\{s, f_j\}.\label{eq:eg}
\end{equation}
According to Lemma~\ref{lem:2k-eg}, it is sufficient to prove the inequality for $s \le 2k-1$, since for larger $s$, the inequality holds.

Since there are no common leaves, any  $f_j$ is at least $2k-1$, therefore  $\min\{s,f_j\}$ is $s$ for any $s \le 2k-1$. Writing this into Equation~\ref{eq:eg}, we get that
\begin{equation}
\sum_{i = 1}^s f_i \le s(s-1) + (n-s)s = s(n-1).
\end{equation}
And this is in fact the case since we claim that the sum of the degrees cannot be more than $n-1$ on any vertex. Indeed, $d_v^{(i)} = l$ means at least $l$ leaf vertices which are not $v$ in tree $T_i$. Since there are no common leaves, and there are $n-1$ vertices when $v$ is excluded, $f_v = \sum_{i=1}^k d_v^{(i)} \le n-1$. So this inequality holds for $s \leq 2k-1$.
\qed
\end{proof}

We now present partial results on Conjecture~\ref{conj:main}. The results are obtained by inductive proofs in which larger realizations are constructed from smaller realizations. The constructions use the existence of rainbow matchings, defined below.

\begin{definition}
A \emph{matching} is a set of disjoint edges. In an edge-colored graph, a \emph{rainbow matching} is a matching in which no two edges have the same color.
\end{definition}

\section{The theorem for $4$ tree degree sequences}

In this section, we are going to prove that $4$ tree degree sequences always have edge disjoint realizations if they do not have common leaves. The proof is based on induction. In the inductive step, we need the following lemma to reduce the case to a case with fewer number of vertices.

\begin{lemma}\label{lem:reduction}
Let $D^{(1)},D^{(2)},\ldots,D^{(k)}$ be tree degree sequences without common leaves such that not all of them are degree sequences of paths. Then there exists vertices $v$, $w$ and an index $i$ such that $d_v^{(i)} = 1$, $\forall\ j \ne i$ $d_v^{(j)} =2$,  and $d_w^{(i)} >2$.
\end{lemma}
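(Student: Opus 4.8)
The plan is to encode the conclusion through a single quantity attached to each vertex. For a vertex $v$ write $s_v := \sum_{i=1}^k d_v^{(i)}$ for its total degree across all $k$ trees. The no-common-leaves hypothesis says at most one tree can have $v$ as a leaf, so $s_v \ge 1 + 2(k-1) = 2k-1$, with equality precisely when $v$ is a leaf in exactly one tree $i$ and has degree exactly $2$ in every other tree. Thus a vertex satisfying the first two required conditions ($d_v^{(i)}=1$ and $d_v^{(j)}=2$ for all $j\ne i$) is exactly a vertex with $s_v = 2k-1$; I would call such a vertex \emph{good} and call the unique tree in which it is a leaf its \emph{host}. The task then reduces to producing a good vertex whose host is not a path, since a non-path tree has a vertex of degree exceeding $2$ (a tree degree sequence with maximum degree $\le 2$ has exactly two leaves and is a path), which supplies the required $w$.

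First I would count good vertices. Summing over all vertices, $\sum_v s_v = k(2n-2) = 2kn - 2k$. On the other hand, writing $a$ for the number of good vertices and using $s_v \ge 2k$ for every non-good vertex,
\[
2kn - 2k = \sum_v s_v \ge a(2k-1) + (n-a)(2k) = 2kn - a,
\]
which forces $a \ge 2k$. Hence there are at least $2k$ good vertices.

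Next I would argue by contradiction that at least one good vertex has a non-path host. Suppose instead that every good vertex is hosted by a path tree. A path tree has exactly two leaves, so it can host at most two good vertices; if $p$ denotes the number of path degree sequences among the $k$, the total number of good vertices is then at most $2p$. Since not all of the sequences are paths, $p \le k-1$, giving at most $2(k-1) = 2k-2$ good vertices, which contradicts $a \ge 2k$. Therefore some good vertex $v$ has a non-path host $T_i$, and taking $w$ to be any vertex with $d_w^{(i)} > 2$ completes the construction.

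I do not anticipate a serious obstacle: the argument is a short double count rather than an induction. The only points needing care are the equality analysis that pins down $s_v = 2k-1$ as exactly the ``good'' configuration, and the bookkeeping that a path hosts at most two good vertices; both are routine once the invariant $s_v$ and the leaf-counting identity $L = 2 + \sum_{d_v \ge 3}(d_v-2)$ are in hand.
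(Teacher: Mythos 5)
Your proof is correct. You identify exactly the same distinguished set of vertices as the paper (its set $A=\{v\mid \sum_i d_v^{(i)}=2k-1\}$ coincides with your ``good'' vertices, and the observation that such a vertex is a leaf in exactly one tree and has degree $2$ elsewhere is identical), but you close the argument differently. The paper only establishes $A\neq\emptyset$ and then argues by contradiction with an averaging computation over submatrices of the degree matrix: assuming every tree hosting an $A$-vertex has all entries at most $2$ on the high-degree columns, the row sums force the average over $B''$ below $2$ while the column sums force it to be at least $2$. You instead extract the sharper quantitative fact $|A|\ge 2k$ from the double count $\sum_v s_v = 2kn-2k \ge 2kn-a$, and then observe that a path degree sequence has only two leaves and hence can host at most two good vertices, so the at most $k-1$ path sequences can absorb at most $2k-2<2k$ of them. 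Both contradictions are reached from the same hypothesis (``every host tree is a path''), but your leaf-counting step is more elementary than the paper's matrix-average bookkeeping, and it yields the extra information $|A|\ge 2k$ as a byproduct, which the paper's version does not record. The remaining details you rely on (a tree degree sequence with maximum degree at most $2$ is a path degree sequence, and $w\neq v$ automatically since $d_w^{(i)}>2>1=d_v^{(i)}$) are routine and correct.
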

\begin{proof}
We structure the proof in terms of the degree matrix and certain submatrices. To start with, let the degree matrix be structured such that the $v\th$ column corresponds to vertex $v$ and the $i\th$ row to degree sequence $i$, so that entry $(i,v)$ of the matrix is $d_v^{(i)}$. We will refer to vertices and columns interchangeably, and a set of vertices $S$ determines a set of columns that can be seen as a submatrix (but each submatrix does not always determine a set of vertices). We will use $|S|$ to denote the cardinality of the set, i.e.\ the number of columns of the submatrix $S$, and $\overline{M}$ to denote the average of the elements of a submatrix $M$.

Let $A$ be the set of ``low-degree'' vertices, $A = \{ v \mid d_v = 2k-1 \}$, and $B$ the set of ``high-degree'' vertices, $B = \{ v \mid d_v \geq 2k \}$. Note that this forms a partition of the vertex set $V$; there are no vertices $v$ with $d_v < 2k-1$ since that would force at least two common leaves. Note also that $A$ is nonempty; if it weren't then we would have $B=V$, which is impossible since the degrees of vertices in $B$ are too high: the total degree sum is at least $2kn$ when it should of course be exactly $2k(n-1)$. Similarly $B$ is nonempty as well, since not all the degree sequences are degree sequences of paths.

Observe that the vertices in $A$ must consist of exactly one leaf and the rest degree-2 vertices. That is, for all $v \in A$ there exists exactly one $i$ such that $d_v^{(i)} = 1$, and for all other $j$ we have $d_v^{(j)} = 2$; otherwise they would have common leaves.

The proof is by contradiction. Assume that for every pair $v \in A, w \in B$, in every sequence $i$ it is the case that $d_w^{(i)} \leq 2$ whenever $d_v^{(i)} = 1$.

Permute the rows and columns such that the columns of $A$ are to the left and those of $B$ to the right, where $A$ (considered as a submatrix) is ordered such that the rows with 1s are all on top. These rows determine submatrices $A'$ of $A$ and $B'$ of $B$, and the other rows determine submatrices $A''$ and $B''$ similarly, sitting at the bottom. We make some important observations:
\begin{enumerate}
\item Each row in $A'$ contains at least one 1 by construction (this was precisely why we picked those rows).
\item $A''$ consists entirely of 2s. This is again by construction, since we placed all rows with 1s on top in $A'$. In particular, $\overline{A''} = 2$.
\item Each element of $B'$ is at most 2. This follows from our assumption: for each row $i$ of $B'$ we know that $d_v^{(i)} = 1$ for all $v \in A$, and so by assumption $d_w^{(i)} \leq 2$ for all $w \in B$. In particular, $\overline{B'} \leq 2$.
\item $A''$ (and hence $B''$) is nonempty. If not then take any vertex $w$ having a degree greater than $2$. Such a vertex exists since not all degrees sequences are path degree sequences. If $i$ is an index for which $d_w^{(i)} > 2$, take a $v \in A'$ such that $d_v^{(i)} = 1$. Again, such vertex exists if $A''$ is empty.
\end{enumerate}

Now consider the submatrix $\left[\begin{array}{cc} A'' & B'' \end{array}\right]$, i.e.\ all the bottom rows. The row sum for each row must be $2n-2$, and since $\overline{A''} = 2$ this forces $\overline{B''} < 2$. But now consider the submatrix $B$, i.e.\ $\left[\begin{array}{c} B' \\ B'' \end{array}\right]$. The column sum in each column must be at least $2k$. Since  $\overline{B'} \leq 2$, this forces $\overline{B''} \geq 2$. This is a contradiction. \qed
\end{proof}

We also need the following lemma to be able to build up edge disjoint realization of a tree degree sequence quartet from the realization of a smaller tree degree sequence quartet.

\begin{lemma}\label{lem:rainbow}
Let $H = (V,E)$ be an edge colored graph in which $|V| \ge 10$ and the number of colors is $4$. Furthermore, the subgraphs for each color is a tree on n vertces, and the trees do not have a common leaf. That is, if $d_v^{(i)} = 1$, then $d_v^{(j)} >1$ for all $j\ne i$. Let $v_j \in V$ be an arbitrary vertex, and let $G$ be the subgraph of $H$ containing the first $3$ colors. Then $G \setminus \{v_j\}$ contains a rainbow matching of size $3$.
\end{lemma}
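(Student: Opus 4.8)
The plan is to recast the statement as a rainbow matching problem in three forests and attack it through K\"onig's theorem. Write $T_1,T_2,T_3,T_4$ for the four color trees, and for $i\in\{1,2,3\}$ let $T_i'=T_i\setminus\{v_j\}$ be the forest obtained by deleting $v_j$ and its incident edges; each $T_i'$ lives on the $n-1\ge 9$ remaining vertices. A rainbow matching of size $3$ in $G\setminus\{v_j\}$ is exactly a choice of pairwise disjoint edges $e_1\in T_1'$, $e_2\in T_2'$, $e_3\in T_3'$, so this is what I will produce. Throughout, $\nu$ denotes matching number and $\ell_i$ the number of leaves of $T_i$.

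First I would record two easy facts. Each $T_i'$ contains an edge: otherwise every edge of $T_i$ meets $v_j$, so $T_i$ is a star centered at $v_j$, all $n-1$ other vertices are leaves of $T_i$, and by the no-common-leaf hypothesis none of them is a leaf of $T_4$, forcing all (at least two) leaves of $T_4$ to equal $v_j$, which is impossible; hence $\nu(T_i')\ge 1$. Second, since the four leaf sets are pairwise disjoint, $\ell_1+\ell_2+\ell_3+\ell_4\le n$, and because the other three trees each contribute at least two leaves we get $\ell_i\le n-6$ for every $i$. This yields a useful dichotomy: if one tree is ``star-like'' (many leaves) then the remaining three are forced to be near-paths (few leaves), hence have large matching number.

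The engine is a survival criterion from K\"onig's theorem (minimum vertex cover equals maximum matching in a forest). If I pick $e_a\in T_a'$ and $e_b\in T_b'$ first and keep $T_c'$ for last (with $\{a,b,c\}=\{1,2,3\}$), then a valid $e_c$ fails to exist only if the four endpoints of $e_a,e_b$ form a vertex cover of $T_c'$; this can never happen when $\nu(T_c')\ge 5$. A greedy selection therefore succeeds whenever the matching numbers, sorted increasingly, are at least $1,3,5$: take any $e_a$, take $e_b$ from a size-$3$ matching of $T_b'$ avoiding the two endpoints of $e_a$, and take $e_c$ from a size-$5$ matching of $T_c'$ avoiding the four endpoints. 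Combining this with the dichotomy of the previous paragraph settles the bulk of the cases: if one of $T_1,T_2,T_3$ is star-like, the other two are paths with huge matching and the sorted condition $1,3,5$ holds; and if none of them is star-like, then all three are path-like, so for $n$ comfortably above $10$ each $\nu(T_i')\ge 5$ and again we are done.

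The main obstacle is the tight regime left over, when several of the three forests are essentially paths-minus-a-vertex, so that even the largest matching number is only $4$ (as happens for a Hamiltonian path on $10$ vertices after deleting one vertex). Here the sorted bound $1,3,5$ is not automatic, and a careless choice of $e_a,e_b$ could have its four endpoints cover $T_c'$ and block $e_c$. In this regime I would abandon worst-case greedy and instead exploit the slack guaranteed by $|V|\ge 10$ together with the abundance of edges in $T_a'$ and $T_b'$: a short counting argument shows that only a few pairs $(e_a,e_b)$ have endpoint set equal to a minimum vertex cover of $T_c'$, so a ``good'' pair survives. Pinning down exactly which degenerate configurations of the three path-like forests are dangerous, and checking each against the no-common-leaf constraint and the bound $\ell_i\le n-6$, is the delicate heart of the proof, and it is precisely where the hypothesis $|V|\ge 10$ is needed.
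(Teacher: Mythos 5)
There is a genuine gap, and you have located it yourself: the ``delicate heart of the proof'' is exactly the part you do not supply, and it is exactly the part that matters. Your K\H{o}nig/greedy machinery (pick $e_a$, then $e_b$ from a size-$3$ matching, then $e_c$ by the cover criterion) only closes when the sorted matching numbers of $T_1',T_2',T_3'$ reach $(1,3,5)$, and the bounds you have available --- $\nu(T_i')\ge\nu(T_i)-1$ together with $\nu(T_i)\ge\lceil(n-\ell_i+1)/2\rceil$ and $\ell_1+\ell_2+\ell_3\le n-2$ --- never deliver a $5$ when $n=10$: there each $\ell_i\le 4$, the best guarantee is $\nu(T_i')\ge 4$, and a Hamiltonian path minus a vertex really can have matching number exactly $4$. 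The same failure persists for $n=11$ and for some leaf distributions at $n=12$ (e.g.\ $\ell=(3,3,4)$). Since the induction in the main theorem first invokes the lemma on a reduced graph with exactly $10$ vertices, the unproved ``short counting argument'' is not a corner case but the entire content of the lemma in the regime where it is actually used. That counting is also not obviously short: you must produce a \emph{disjoint} pair $(e_a,e_b)$ whose four endpoints miss at least one edge of $T_c'$, and a forest with $\nu=4$ arising as a tree minus a vertex can have several components and many distinct minimum vertex covers, so ``only a few pairs are bad'' needs a real argument against the structure of all three forests simultaneously.

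It is worth noting that your strategy is essentially the one the paper itself deploys for general $k$ (Lemmas~\ref{lem:greedy-rainbow} and~\ref{lem:matching-internal}, Theorem~\ref{theo:lower-bound-vertices}), and that machinery likewise only guarantees a rainbow matching once $n\ge 4k-2$, i.e.\ $n\ge 14$ for four trees. The authors' proof of this lemma is a direct structural case analysis on a fixed $9$-vertex configuration (a vertex $v$ adjacent to $v_j$ in the fourth tree with one edge of each colour, plus five further vertices, split by how many disjoint edges live among those five) precisely because the matching-number bounds cannot reach down to $n=10$. So your proposal is a reasonable reduction for large $n$, but as written it does not prove the lemma; to complete it you would have to carry out the degenerate-configuration analysis you defer, and that analysis is comparable in difficulty to the paper's own case analysis.
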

\begin{proof}
We will call the $3$ colors red, blue and green. Let $v$ be a vertex with at least one edge of each color going to a vertex that is not $v_j$; such a vertex can easily be seen to exist. Indeed, just pick any vertex adjacent to $v_j$ in the fourth tree. Thus let vertices $v, u_1, u_2, u_3$ be such that $(v, u_1)$ is blue, $(v, u_2)$ green and $(v, u_3)$ red. We will refer to these vertices as ``the complex''.

Let $W = \{w_1, \dots, w_5\}$ be five other vertices; these exist because $|V| \geq 10$. We make some important observations. Let $w$ be an arbitrary vertex in $W$. Because we have no common leaves, vertex $w$ has degree at least 2 in all colors with the possible exception of one, in which it may have degree 1; in any case it has total degree at least 5. It may be adjacent to $v_j$ in some color, so excluding $v_j$ it has total degree at least 4. Moreover, for each color, $w$ is incident with at least two edges not of this color and avoiding $v_j$.

We claim that we can always find a rainbow matching in this setup. Our proof will have three cases based on how many disjoint edges there exist within $W$: at least two, exactly one, and none (i.e.\ no edges within $W$ at all).

\paragraph{Case 1: At least two disjoint edges within $W$.}
Let two disjoint edges be $(w_1, w_2)$ and $(w_3, w_4)$. Clearly we may assume they are the same color because otherwise we have a rainbow matching easily; say they are red. Now observe that $w_5$ has at least 2 non-red edges, and they must both go to the complex because if there was one edge, $e$, which didn't, it would cover only one of the two disjoint red edges in $W$ and so we'd have a rainbow matching: $e$, the red edge in $W$ which is vertex independent from $e$, and the third color edge in the complex.

So $w_5$ has 2 non-red edges going to the complex. Clearly at most one of these goes to v; w.l.o.g. say one which does not go to $v$ is blue --- then it must be $(w_5, u_2)$ to avoid a rainbow matching. We now identified two vertex disjoint blue edges, $(v,u_1)$ and $(w_5,u_2)$, two disjoint red edges, $(w_1, w_2)$ and $(w_3, w_4)$, and one more red edge $(v,u_3)$. We claim that any green edge not blocking both blue edges can be extended to a rainbow matching. Indeed, if it blocks both red edges in $W$, then that green edge, $(v,u_3)$ and $(w_5,u_2)$ is  rainbow matching. If it does not block both red edges in $W$ and does not block the two blue edges (as we assume), then we can find a rainbow matching.

However, only $3$ green edges can block both blue edges, otherwise there was a green cycle. If there are no more green edges in $G \setminus \{v_j\}$ than these edges, then the the non-green degree of $v_j$ was at most $3$, contradicting that $v_j$ is a leaf in at most one of the trees (in $H$!). Therefore there is at least one green edge not blocking both blue edges, and thus, there is a rainbow matching.

\paragraph{Case 2: Exactly one disjoint edge within $W$.} There is at least one edge within W, but no pair of disjoint edges. Then these edges either form a star or a triangle. First suppose we have a star; w.l.o.g., say $w_1$ is the center of the star. We claim that among $w_2, w_3, w_4, w_5$ we can find $w_i, w_j$ so that $(w_1, w_i)$ is an edge of color red (say) and $w_j$ sends two non-red edges to the complex. Indeed, either the star with center $w_1$ contains $4$ leaves or there exists $w_j, j \in \{2, 3, 4, 5\}$ such that $(w_1, w_j)$ is not an edge. 

First, suppose each of $w_2, w_3, w_4, w_5$ have an edge going to $w_1$. Then by the Pigeonhole Principle, we can find $w_i, w_j$ among them such that $(w_1, w_i)$ and $(w_1, w_j)$ are the same color (say red). Notice then that $w_j$ has at least two non-red edges by the no-common-leaves condition, and both of these must go to the complex since $w_1$ is the center of our star.

Suppose on the other hand that $\exists w_j, j \in \{2, 3, 4, 5\}$ such that $(w_1, w_j)$ is not an edge. Then fix this $w_j$ and choose $w_i$ to be such that $(w_1, w_i)$ is an edge (which is possible since there's at least one edge within W and $w_1$ is the center of our star). Let the color of $(w_1, w_i)$ be red w.l.o.g. and notice that $w_j$ must send at least two non-red edges to the complex. Thus the claim is true.

We can say w.l.o.g. that $i=2$, $j=3$. That is, $(w_1, w_2)$ is a red edge and $w_3$ sends two non-red edges to the complex. In particular $w_3$ sends a non-red edge to the complex that does not go to $v$; w.l.o.g. say it is blue. Then it must be $(w_3, u_2)$ to prevent a rainbow matching. Now consider $w_4$ and $w_5$: notice that if either one has a green edge, we are done. Therefore we may assume they both have green-degree 0. 

Fix the green edge $(v, u_2)$, meaning we will build a rainbow matching containing this edge. There are no edges between $w_4$ and $w_5$ since $w_1$ is the center of our star so then $w_4$ and $w_5$ have at least 4 distinct red and 4 distinct blue edges between them. By no double edges, at most 4 of these 8 edges -- at most 2 from each of $w_4$ and $w_5$ -- cover the green edge $(v, u_2)$, and since we have no monochromatic cycles, at most 3 of either color do. Therefore we can, w.l.o.g., choose a red edge from $w_4$ and a blue edge from $w_5$ which are both disjoint from the green $(v, u_2)$. Then they must have the same endpoint, or else we have a rainbow matching. Moreover, this endpoint must be $w_1$ because otherwise we could choose the red $(w_1, w_2)$ along with the blue edge from $w_5$ which is disjoint from the green $(v, u_2)$ and we would have a rainbow matching. Thus we can assume we have red $(w_4, w_1)$ and blue $(w_5, w_1)$. 

Now $w_4$ and $w_5$ each have at least three more edges. These edges must avoid vertices in $W$ because the edges within $W$ were assumed to be a star (centred at $w_1$). In particular, they must each have one edge which avoids both $v$ and $u_2$. If either of these is blue, we are done along with the red $(w_1, w_2)$; on the other hand, if the edge from say $w_4$ is red, then we take it along with the blue $(w_5, w_1)$. So in any case, we can find a rainbow matching, and the star case is complete.

Now suppose we have a triangle within W: we can say w.l.o.g. that it is between the vertices $w_1, w_2, w_3$. Then all edges from $w_4$ and $w_5$ go to the complex. Clearly the edges in the triangle cannot all be the same color because that would make a cycle. If we have one edge of each color, then we claim we are done easily: w.l.o.g., say we have red $(w_1, w_2)$, blue $(w_1, w_3)$, and green $(w_2, w_3)$. Then choose any edge from $w_4$ which does not go to $v$. We can say w.l.o.g. that it is green. It blocks at most one of the red $(v, u_3)$ and the blue $(v, u_1)$, so we can pick one of these along with our green edge, and then complete our rainbow matching with an edge from the triangle.

So we may now assume that we have two edges in the triangle of one color, and the third edge is a different color. We can say w.l.o.g. that $(w_1, w_2)$ and $(w_2, w_3)$ are red and $(w_1, w_3)$ is blue. Now notice that $w_4$ and $w_5$ each send at least 2 non-red edges to the complex. If all four of these are blue, then at least one is disjoint from the green $(v, u_2)$ (otherwise we'd have a cycle), and we're done. So then we may assume at least one of these edges is green. If this green edge does not go to $v$, it is disjoint from either the red $(v, u_3)$ or the blue $(v, u_1)$, and we finish the rainbow matching with an appropriate edge from the triangle. So we may assume the green edge goes to $v$: w.l.o.g., say it is $(w_4, v)$. Now look at any non-red edge from $w_5$ which doesn't go to $v$. If it's green, we're done, as argued above. And if it's blue, then we take it along with the green $(w_4, v)$ and a red edge from the triangle. So in either case we have a rainbow matching, and the triangle case is complete.

\paragraph{Case 3: No edges within $W$.}
Observe that if any edge from a vertex in $W$, say $w_1$ were to go to some entirely new vertex $w$, then we could replace say $w_2$ with $w$ to get back to Case 2 (or possibly Case 1). So we may assume that all edges from $W$ go to the complex. That is, ignoring the edges within the complex, we have a bipartite graph with $W$ on one side and the complex on the other. We claim that this is in fact a complete bipartite graph. Indeed, observe that each vertex in $W$ sends four vertices to the complex. Since it cannot send two edges to the same vertex, it must send one edge each to each vertex in the complex. This describes a complete bipartite graph.

Together with the edges in the complex, there are $23$ edges. By the Pigeonhole Principle, there is a color, w.l.o.g. say red, with at least $8$ edges on these $9$ vertices. Notice, that it cannot be more, because then there would be a cycle. Thus, there are exaclty $8$ red edges, including the red one in the complex. For similar reasons, the remaning $15$ edges is split $8$ and $7$ between the remaining two colors. Therefore, in the bipartite complete graph, we can say w.l.o.g. that there are $7$ red and $7$ blue edges and there are $6$ green edges.

Take any of the green edges. The remaining $K_{4,3}$ complete graph contains 
$12$ edges, at most $5$ of them are green. So there are at least $7$ non-green 
edges. Not all can be red, otherwise there would be cycles. We can assume w.l.o.g.
that there are more red edges than blue ones, so there are at least $4$
red edges. There are the following cases
\begin{enumerate}
\item There is only one blue edge, but then there are at least $6$ red
edges. Only $2$ red edges can block one of its vertices, and only $3$ the
other one, so there should be vertex independent red edge, we are
ready.

\item There are two or three blue edges, they share a common vertex. The
shared vertex can be blocked by at most two red edges. There are more
red edges, which block at most one of the blue edges. We are ready.

\item There are a pair of blue edges, vertex disjoint. Only 2 red edges
can  block both of them, however, there are at least 4 red edges, so we are ready.
\end{enumerate}\qed
\end{proof}

The following two lemmas establish the base cases of the induction. The first lemma is stated and proved for an arbitrary number of path degree sequences, later we use that version in a proof.

\begin{lemma}\label{lem:hamilton}
Let $D_1, D_2,\ldots D_k$ be path degree sequences without common leaves. They have edge disjoint realizations.
\end{lemma}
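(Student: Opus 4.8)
The plan is to restate the lemma graph-theoretically and then reduce it to a Hamiltonian decomposition problem. A path degree sequence $D_i$ on $n$ vertices is realized precisely by a Hamiltonian path of $K_n$, and its two leaves are exactly the endpoints of that path. The no-common-leaves hypothesis says that the $2k$ leaf positions (two per sequence) sit at pairwise distinct vertices, so they determine $k$ disjoint endpoint pairs $\{a_i,b_i\}$ and force $n \ge 2k$. Thus the lemma is equivalent to the following statement: given $k$ disjoint pairs $\{a_i,b_i\}$ of vertices of $K_n$ with $2k \le n$, there exist $k$ edge-disjoint Hamiltonian paths $P_1,\dots,P_k$ such that $P_i$ has endpoints $a_i$ and $b_i$.

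First I would add an apex vertex $\infty$ to form $K_{n+1}$. A Hamiltonian path of $K_n$ with endpoints $a,b$ corresponds bijectively to a Hamiltonian cycle of $K_{n+1}$ through $\infty$ whose two edges at $\infty$ are $\{\infty,a\}$ and $\{\infty,b\}$; deleting $\infty$ recovers the path. Hence it suffices to find $k$ edge-disjoint Hamiltonian cycles of $K_{n+1}$ in which cycle $i$ meets $\infty$ exactly in the edges $\{\infty,a_i\}$ and $\{\infty,b_i\}$. Since the prescribed pairs are disjoint, the edges at $\infty$ used by different cycles are automatically distinct, so edge-disjointness of the cycles gives edge-disjointness of the recovered paths for free.

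To produce the cycles I would invoke the classical Hamiltonian decomposition of complete graphs: for odd $m$, $K_m$ decomposes into $(m-1)/2$ edge-disjoint Hamiltonian cycles, and for even $m$, $K_m$ decomposes into $(m-2)/2$ Hamiltonian cycles together with one perfect matching. Applying this to $K_{n+1}$, the cycles induce at $\infty$ a pairing of the $n$ finite vertices: a perfect matching $\mathcal{M}_0$ when $n$ is even, and a near-perfect matching (missing one vertex, whose $\infty$-edge lies in the leftover matching) when $n$ is odd. In either parity the number of cycles produced is exactly $\lfloor n/2\rfloor \ge k$, which is precisely what $2k\le n$ guarantees.

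The only real work is to make this induced pairing agree with the prescribed pairs, and that is where relabeling enters. Any permutation $\sigma$ of the finite vertices (fixing $\infty$) carries one Hamiltonian decomposition of $K_{n+1}$ to another and transforms the $\infty$-pairing by $\sigma$. Because $2k\le n$ with the right parity, the prescribed pairs extend to a matching $\mathcal{M}'$ of the same type as $\mathcal{M}_0$: when $n$ is even the $n-2k$ leftover vertices are even in number and can be paired arbitrarily, and when $n$ is odd they are odd in number, so one can be left uncovered and the rest paired. Choosing $\sigma$ with $\sigma(\mathcal{M}_0)=\mathcal{M}'$ yields a decomposition whose $\infty$-pairing contains all $k$ prescribed pairs; selecting the $k$ cycles responsible for those pairs and deleting $\infty$ gives the desired edge-disjoint Hamiltonian paths. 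I expect this endpoint-matching step to be the main obstacle, since a direct induction on $k$ fails (removing one path leaves $K_n$ minus a Hamiltonian path, which is no longer complete). The apex construction plus relabeling is exactly what circumvents this, with the even/odd parity bookkeeping being the part that must be checked with care. As an alternative to citing the classical decomposition, one could instead write down the explicit rotational zig-zag decomposition and read off its $\infty$-pairing directly, but the abstract version above keeps the argument short.
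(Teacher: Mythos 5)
Your proof is correct, and it proves the same statement the paper needs: $k$ edge-disjoint Hamiltonian paths of $K_n$ with $k$ prescribed disjoint endpoint pairs, which exist exactly when $n\ge 2k$. The route differs from the paper's in its packaging. The paper is entirely self-contained and constructive: after the same implicit relabelling step (``w.l.o.g.\ the leaves of the $i\th$ path are $i$ and $\lceil n/2\rceil+i$''), it writes down an explicit zig-zag Hamiltonian path $(i,n-1+i),(n-1+i,1+i),(1+i,n-2+i),\dots$ and observes that its rotations by fewer than $\lfloor n/2\rfloor$ steps are pairwise edge-disjoint (the usual difference argument), with a small parity case for the last edge. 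You instead pass to $K_{n+1}$ via an apex vertex, invoke the classical Walecki decomposition of the complete graph into Hamiltonian cycles (plus a perfect matching in the even-order case) as a black box, and then permute the finite vertices so that the induced pairing at the apex extends your prescribed pairs. Both arguments are at bottom the same rotational decomposition; yours buys brevity and makes the endpoint-prescription step conceptually transparent (it is just the transitivity of $S_n$ on matchings of a given type), at the cost of citing an external theorem and carrying the even/odd bookkeeping at the apex, while the paper's version is longer to verify in detail but needs no outside input. Your closing remark that one could instead read the pairing off the explicit rotational zig-zag is, in effect, exactly what the paper does.
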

\begin{proof}
The proof is by construction. It should be clear that $n \ge 2k$, since any tree contains at least two leaves.
We can say, w.l.o.g. that the leaves in the $i$ path have indexes $i$ and $\left\lceil\frac{n}{2}\right\rceil+i$. Then the  $i\th$ path contains the edges
$(i,n-1+i)$, $(n-1+i,1+i)$, $(1+i,n-2+i)$, $(n-2+i,2+i)$, $\ldots$, where the indexes are modulo $n$ shifted by $1$, that is, between $1$ and $n$.
The last edge is
 $\left(\left\lceil\frac{n}{2}\right\rceil+i-1,\left\lceil\frac{n}{2}\right\rceil+i\right)$ 
if $n$ is even, and  $\left(\left\lceil\frac{n}{2}\right\rceil+i+1,\left\lceil\frac{n}{2}\right\rceil+i\right)$ if $n$ is odd. Figure~\ref{fig:octagon} shows an example for $8$ vertices. It is easy to see that there are no parallel edges if such a path is rotated with at most $\left\lfloor\frac{n}{2}\right\rfloor$ vertices.
\qed
\end{proof}

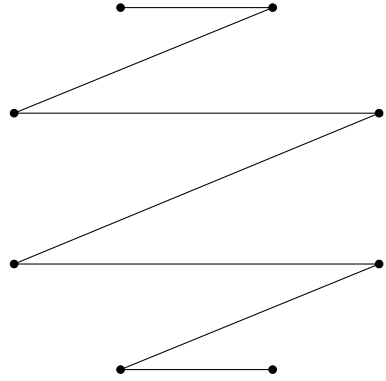
\begin{figure}
\setlength{\unitlength}{1cm}
\begin{center}
	\begin{tikzpicture}
\draw (4,0) -- (2,0) -- (5.4,1.4) -- (0.6,1.4)  -- (5.4,3.4) -- (0.6,3.4)  -- (4,4.8) -- (2,4.8) ;
\draw[black, fill= black] (2,0) circle(0.05);
\draw[black, fill= black] (4,0) circle(0.05);
\draw[black, fill= black] (0.6,1.4) circle(0.05);
\draw[black, fill= black] (5.4,1.4) circle(0.05);
\draw[black, fill= black] (0.6,3.4) circle(0.05);
\draw[black, fill= black] (5.4,3.4) circle(0.05);
\draw[black, fill= black] (2,4.8) circle(0.05);
\draw[black, fill= black] (4,4.8) circle(0.05);
\end{tikzpicture}
\end{center}
\caption{An example Hamiltonian path on 8 vertices. See the text for details.}\label{fig:octagon}
\end{figure}

\begin{lemma}\label{lem:smallcases}
Let $D_1, D_2, D_3, D_4$ be tree degree sequences on at most $10$ vertices, without common leaves. They have edge disjoint realizations.
\end{lemma}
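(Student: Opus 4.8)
The plan is to exploit how severely the hypotheses constrain the degree matrix when $n$ is small. First I would pin down the feasible values of $n$. The four trees are pairwise edge disjoint, so their union is a simple graph with $4(n-1)$ edges and hence $4(n-1) \le \binom{n}{2}$, which forces $n \ge 8$; combined with the standing hypothesis $n \le 10$ this leaves only $n \in \{8,9,10\}$. Next I would bound the total amount of branching. Since there are no common leaves, every vertex is a leaf in at most one of the four trees, so writing $\ell_i$ for the number of leaves of the $i$th tree we have
\begin{equation}
8 \le \sum_{i=1}^{4} \ell_i \le n .
\end{equation}
A tree with $\ell$ leaves carries exactly $\ell - 2$ units of branching (a vertex of degree $d$ counting $d-2$ times), so the four trees together have at most $\sum_i(\ell_i - 2) \le n - 8 \le 2$ units of branching. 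Only two ``extra'' branches can occur across all four trees combined.

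This reduces the lemma to a short list of structural types. When the total branching is $0$ all four degree sequences are path degree sequences, and the statement is exactly Lemma~\ref{lem:hamilton}. The remaining possibilities are tightly limited: for $n=9$, one tree has a single degree-$3$ vertex and the other three are paths; for $n=10$, either one tree has a single degree-$3$ vertex, or two trees each have one degree-$3$ vertex, or one tree has a single degree-$4$ vertex, or one tree has two degree-$3$ vertices, the rest being paths in each case. Up to relabelling vertices and permuting the rows this is a finite and small collection of degree matrices, each compatible with the no-common-leaves constraint and (by the bound $f_v \le n-1$ proved above) with a simple union graph.

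For each non-path type I would produce the edge disjoint realizations directly. The cleanest route I would try is to mimic the inductive step internally: apply Lemma~\ref{lem:reduction} (which is available precisely because not all sequences are paths) to locate a vertex $v$ that is a leaf in one tree $i$ and has degree $2$ in the others, together with a vertex $w$ with $d_w^{(i)} > 2$; then delete $v$ to obtain a smaller valid quartet of strictly lower total branching, realize it recursively, the recursion terminating in the all-path case handled by Lemma~\ref{lem:hamilton}, and finally reinsert $v$ using a size-$3$ rainbow matching among the first three colors. Where this meets resistance is exactly the reinsertion step: Lemma~\ref{lem:rainbow} guarantees the needed rainbow matching only when $|V| \ge 10$, and here the reduced graph has $8$ or $9$ vertices, so I would instead verify the existence of the rainbow matching by hand for each of the finitely many reduced configurations, or fall back to a direct construction of the realizations.

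I expect the main obstacle to be precisely these small non-path cases for $n=9,10$: the general rainbow-matching lemma does not apply below $10$ vertices, so the branch vertices of degree $3$ and $4$ must be placed by hand while simultaneously avoiding repeated edges and monochromatic cycles. The fiddliest instances should be the $n=10$ configurations carrying two units of branching (a degree-$4$ vertex, or two degree-$3$ vertices), and since the whole collection is finite and small, a direct computer check is a safe fallback to certify all of them.
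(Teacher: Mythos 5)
Your proposal takes essentially the same route as the paper: reduce the statement to a finite, explicitly enumerable list of degree-matrix types and exhibit an edge disjoint realization for each. The paper's proof body is exactly this one-line reduction, with the $14$ quartets (up to permutation of sequences and vertices) enumerated and realized by explicit adjacency matrices in the appendix. Your counting argument --- leaves are disjoint across trees so $\sum_i \ell_i \le n$, and each tree carries $\ell_i - 2$ units of excess degree, so the four trees together have at most $n-8 \le 2$ units of branching --- is a clean justification for why the list is finite and small; the paper asserts this and then performs the enumeration case by case in the appendix (splitting further according to which sequences contain the leaves sitting on the branch vertices, which is why the count is $14$ rather than the handful of degree-sequence shapes you list). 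You are also right that the inductive reinsertion cannot be used to discharge these cases, since Lemma~\ref{lem:rainbow} requires $|V|\ge 10$ and the reduced graphs have $8$ or $9$ vertices; the paper makes no attempt to do so and simply writes down the realizations. The one thing your proposal does not actually deliver is those realizations: the entire content of the paper's proof is the appendix of $14$ explicit colored adjacency matrices, and your argument ends where that verification begins, deferring to a hand or computer check. That deferral is legitimate (the check is finite and routine), but as written the proposal is a correct plan rather than a completed proof.
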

\begin{proof}
Up to isomorphism, there are only $14$ possible such degree sequence quartets. The appendix contains a realization for each of them.
\qed
\end{proof}

Now we are ready to prove the main theorem.

\begin{theorem}
Let $D_1, D_2, D_3, D_4$ be tree degree sequences without common leaves. They have edge disjoint tree realizations.
\end{theorem}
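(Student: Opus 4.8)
The plan is to prove the theorem by strong induction on the number of vertices $n$. For the base cases, observe first that the no-common-leaves condition forces every vertex to have total degree at least $2\cdot 4 - 1 = 7$, so comparing the total degree sum $4(2n-2)$ against the lower bound $7n$ gives $n \ge 8$; thus for $8 \le n \le 10$ the statement is exactly Lemma~\ref{lem:smallcases}. Moreover, if all four sequences happen to be path degree sequences, Lemma~\ref{lem:hamilton} applies directly. So in the inductive step I may assume $n \ge 11$ and that not all four sequences are paths, and I may freely invoke the theorem for every valid quartet on $n-1$ vertices (paths included, via Lemma~\ref{lem:hamilton} at that level).

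For the inductive step I would apply Lemma~\ref{lem:reduction} to obtain vertices $v, w$ and an index $i$, which after relabeling I take to be $i = 4$, so that $d_v^{(4)} = 1$, $d_v^{(j)} = 2$ for $j \in \{1,2,3\}$, and $d_w^{(4)} > 2$. I then pass to a smaller instance on $n-1$ vertices by deleting the column of $v$ and decreasing the degree of $w$ in the fourth sequence by one. Every row sum becomes $2(n-1)-2$, all entries stay positive (in particular $d_w^{(4)} - 1 \ge 2$), and one checks directly that no common leaves are created: the degrees of all vertices other than $v$ and $w$ are untouched, $v$ is removed, and $w$ drops from degree $\ge 3$ to degree $\ge 2$ in the fourth tree, so it becomes a leaf in no new sequence. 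Hence the reduced sequences $D_1', D_2', D_3', D_4'$ are again tree degree sequences without common leaves, and the induction hypothesis yields edge-disjoint tree realizations $T_1', T_2', T_3', T_4'$ on $n-1 \ge 10$ vertices.

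The reinsertion of $v$ is where Lemma~\ref{lem:rainbow} enters. Applying it to the edge-colored graph $T_1' \cup T_2' \cup T_3' \cup T_4'$ with distinguished vertex $v_j := w$ produces a rainbow matching $e_1, e_2, e_3$ of size three in the first three colors, vertex-disjoint and avoiding $w$; write $e_r = (a_r, b_r)$. I would then build the realizations on $n$ vertices by subdividing each $e_r$ in its own color (delete $(a_r, b_r)$ from $T_r'$ and add the edges $(a_r, v)$ and $(v, b_r)$), giving $v$ degree $2$ in colors $1,2,3$, and by attaching $v$ as a leaf to $w$ in the fourth color (add the edge $(w, v)$), restoring $d_w^{(4)}$. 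Each color remains a tree, since subdividing an edge and attaching a leaf both preserve treeness, and the resulting degrees match $D_1, \dots, D_4$ exactly. Edge-disjointness is then the only point to verify: every newly added edge is incident to the brand-new vertex $v$, and its seven neighbors $a_1, b_1, a_2, b_2, a_3, b_3, w$ are pairwise distinct precisely because the three matching edges are vertex-disjoint and avoid $w$, so no pair $\{v, x\}$ is ever asked to carry two different colors.

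The main obstacle is concentrated entirely in producing the rainbow matching that avoids the prescribed vertex $w$, i.e.\ in Lemma~\ref{lem:rainbow}; granting it, the inductive step is a clean \emph{delete a leaf/degree-$2$ vertex, recurse, then subdivide-and-attach} argument. The only points demanding genuine care are the bookkeeping showing that the reduced degree matrix still has no common leaves (so that both the induction hypothesis and Lemma~\ref{lem:rainbow} remain applicable) and the closing observation that distinctness of the seven neighbors of $v$ is exactly what the vertex-disjointness of the rainbow matching buys us.
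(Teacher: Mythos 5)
Your proposal is correct and follows essentially the same route as the paper: induction with base cases given by Lemma~\ref{lem:smallcases} and Lemma~\ref{lem:hamilton}, reduction via Lemma~\ref{lem:reduction} by deleting $v$ and decrementing $d_w^{(i)}$, and reinsertion of $v$ by subdividing the three rainbow-matching edges from Lemma~\ref{lem:rainbow} and attaching $v$ to $w$ in the fourth tree. Your verification that the reduced quartet still has no common leaves and that the seven new edges at $v$ are pairwise distinct is exactly the bookkeeping the paper's argument relies on.
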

\begin{proof}
The proof is by induction, the base cases are the degree sequences on at most $10$ vertices and the path degree sequence quartets. They all have edge disjoint realizations, based on Lemmas~\ref{lem:hamilton}~and~\ref{lem:smallcases}.

So assume that $D_1, D_2, D_3, D_4$ are tree degree sequences on more than $10$ vertices and at least one of them is not a path degree sequence. Then there exist vertices $v$ and $w$ and an index $i$ such that $d_v^{(i)} =1$ for all $j \ne i$, $d_v^{j} =2$ and $d_w^{(i)} > 2$, according to Lemma~\ref{lem:reduction}. Consider the degree sequences $D_1', D_2', D_3', D_4'$ wich is obtained by deleting vertex $v$ and subtracting $1$ from $d_w^{(i)}$. These are tree degree sequences without common leaves, and based on the inductive assumption, they have edge disjoint realizations. Let $H$ be the colored graph representing these edge disjoint realizations and permute the degree sequences (and the colors accordingly) that $D_i$ is moved to the fourth position. Let $G$ be the subgraph of $H$ containing the first $3$ colors after the beforementioned permutation. Since $G$ contains at least $10$ vertices, $G \setminus \{w\}$ contains a rainbow matching according to Lemma~\ref{lem:rainbow}. Let $(v_1,v_2)$, $(v_3,v_4)$ and $(v_5,v_6)$ denote the edges in the rainbow matching. The realization of $D_1, D_2, D_3$ and $D_4$ is obtained by the following way. Take the realization representd by $H$. Add vertex $v$. Connect $v$ with $w$ in the first tree, delete the edges of the rainbow matching, and connect $v$ to all the vertices incident to the edges of the rainbow matching, $2$ edges for each tree, according to the color of the deleted edge.
\qed
\end{proof}

\section{Some results in the general case}

We now present some results in the general case, i.e.\ for an arbitrary $k$ number of tree degree sequences. First we show that $n \geq 4k - 2$ suffices to guarantee a rainbow matching. However for our original purpose of finding edge-disjoint realizations via the inductive proof, this is not sufficient to show that the induction step goes through every time, since our base case is $n = 2k$. We need something else to bridge the gap between $n = 2k$ and $n = 4k - 2$. This is accomplished by our second characterization, which adds an extra condition and says that if we have at least $2k-4$ vertices that are not leaves in any tree, then we are indeed guaranteed edge-disjoint realizations.

\subsection{Rainbow matchings from matchings: $n = O(k)$ guarantees a rainbow matching}

We now show that $n = O(k)$ suffices to guarantee a rainbow matching. The broad line of attack will be to stitch a rainbow matching together from regular (singly-colored, and large but not necessarily perfect) matchings. A crucial ingredient in guaranteeing large matchings will be the fact that a tree with $m$ non-leaves must contain a matching roughly of size $m/2$. The idea will be that when $n$ is large enough, the no-common-leaves condition guarantees a large number of non-leaves in each color, which then guarantees large matchings in each color, which can then be stitched together into a rainbow matching. We formalize the main ingredients as the following lemmas.

\begin{lemma} \label{lem:greedy-rainbow}
Let $G$ be an edge colored graph such that for each color $c_i$, $i = 1, 2, \ldots k$
there is a matching of size $2i$ in the subgraph of color $c_i$. Let $v$ be an arbitrary vertex. Then $G \setminus \{v\}$ has a rainbow matching of size $k$.
%
\end{lemma}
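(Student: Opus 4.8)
The plan is to construct the rainbow matching greedily, processing the colors \emph{in increasing order of their prescribed matching sizes}, i.e.\ $c_1, c_2, \ldots, c_k$. At each step I would select a single edge of the current color that is vertex-disjoint from all edges chosen so far and that also avoids the forbidden vertex $v$. The whole argument then reduces to checking that at every step such an edge is still available, and this follows from a counting estimate that exploits the fact that the matchings grow just fast enough.

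The key observation is that in a matching every vertex is incident to at most one edge. Hence, if $F$ is any set of ``forbidden'' vertices, then at most $|F|$ of the edges of a given color's matching are incident to a vertex of $F$, and every remaining edge is available. Suppose I have already chosen edges for colors $c_1, \ldots, c_{i-1}$, forming a partial rainbow matching using exactly $2(i-1)$ vertices. The forbidden set then consists of these $2(i-1)$ vertices together with $v$, so $|F| = 2(i-1) + 1 = 2i - 1$. Since the hypothesis guarantees a matching of size $2i$ in color $c_i$, and $2i > 2i - 1 = |F|$, at least one edge of that matching avoids every forbidden vertex; adding it extends the partial rainbow matching to colors $c_1, \ldots, c_i$. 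Iterating from $i = 1$ up to $i = k$ yields a rainbow matching of size $k$ in $G \setminus \{v\}$.

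I do not expect a genuine obstacle here, as the argument is a Hall-type greedy selection. The two points that require care are the \emph{order} of processing and the exact bookkeeping of the forbidden set. The order matters crucially: if one instead processed the largest matching first, then at step $j$ the available matching would have size $2(k-j+1)$ while the forbidden set would have size $2j-1$, and the needed inequality $2(k-j+1) > 2j-1$ fails once $j$ is comparable to $k$. Processing smallest-first is exactly what keeps the inequality valid at every step. The extra $+1$ in $|F| = 2i-1$ coming from $v$ is precisely why the lemma demands size $2i$ (rather than $2i-1$) in color $c_i$, so that the strict inequality holds uniformly.
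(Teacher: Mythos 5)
Your proposal is correct and is essentially the same greedy/inductive argument as the paper's: process colors in increasing order of matching size and note that the $2(i-1)$ vertices already used plus $v$ can block at most $2i-1$ of the $2i$ edges of the $i$-th matching, leaving one free edge to add. The paper phrases the count as ``at most $2i$ edges blocked by the partial matching plus at most one incident to $v$, out of $2i+2$,'' which is the same bookkeeping with the index shifted by one.
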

\begin{proof}
The proof is by induction using the Pigeonhole Principle. Since there are $2$ disjoint edges of the first color in $G$, at least one of them is not incident to $v$. Take that edge, which will be in the rainbow matching.

Assume that we already found a rainbow matching of size $i$. There is a matching of size $2i+2$ in the subgraph of color $c_{i+1}$. At most $2i$ of them are blocked by the rainbow matching of size $i$, and at most one of them is incident to $v$. Thus, there is an edge of color $i+1$ which is disjoint from the rainbow matching of size $i$ and not incident to $v$. Extend the rainbow matching with this edge. 
\qed
\end{proof}

\begin{lemma} \label{lem:matching-internal}
A tree with at least one edge and $m$ internal nodes contains a matching of size at least $\left\lceil\frac{m+1}{2}\right\rceil$.
\end{lemma}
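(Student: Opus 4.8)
The plan is to prove the statement by induction on $m$, the number of internal (non-leaf) nodes. The base case $m = 0$ is immediate: a tree with at least one edge and no internal node is a single edge, which is a matching of size $1 = \left\lceil\frac{0+1}{2}\right\rceil$. For the inductive step I would reduce to a smaller tree by deleting a carefully chosen internal vertex together with its pendant leaves, taking one pendant edge into the matching.

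First I would locate a good vertex to delete. Taking a longest path $v_0 v_1 \ldots v_\ell$ in $T$, its endpoint $v_0$ is a leaf and $u := v_1$ is internal; by maximality of the path, every neighbor of $u$ except $v_2$ is a leaf. Thus $u$ is an internal vertex all of whose neighbors but (at most) one are leaves. If $u$ has \emph{no} non-leaf neighbor then $T$ is a star, so $m = 1$ and any single edge is a matching of size $1 = \left\lceil\frac{1+1}{2}\right\rceil$, handling that case directly. Otherwise let $\ell_0$ be a leaf neighbor of $u$ and let $w$ be its unique non-leaf neighbor.

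The reduction step I would carry out is to put the pendant edge $(u, \ell_0)$ into the matching, delete $u$ and all of its leaf neighbors, and discard the resulting isolated vertices, keeping the component $T'$ containing $w$. Since $\deg_T(w) \ge 2$ and only the edge to $u$ is removed, $w$ retains at least one neighbor, so $T'$ is a tree with at least one edge and the induction hypothesis applies to it. The key bookkeeping is the count of internal nodes in $T'$: writing $m'$ for that count, we have $m' = m - 1$ when $w$ remains internal in $T'$ (i.e.\ $\deg_T(w) \ge 3$), and $m' = m - 2$ in the tight case $\deg_T(w) = 2$, where $w$ degenerates into a leaf. By induction $T'$ has a matching of size at least $\left\lceil\frac{m'+1}{2}\right\rceil$, and adding the edge $(u,\ell_0)$ gives a matching of size at least $1 + \left\lceil\frac{m'+1}{2}\right\rceil$ in $T$. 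In the first case this is $\left\lceil\frac{m+2}{2}\right\rceil \ge \left\lceil\frac{m+1}{2}\right\rceil$, and in the second it is exactly $\left\lceil\frac{m+1}{2}\right\rceil$, so the bound holds in both.

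The main obstacle I anticipate is not any single hard idea but the careful handling of the two subcases and the degenerate configurations: I must verify that $u$ with the required structure always exists (via the longest-path argument), that $T'$ genuinely has an edge so that induction may be invoked, and above all that the internal-node count drops by exactly $1$ or $2$ as claimed. The case $\deg_T(w) = 2$ is tight and is precisely where the ceiling is attained, so the arithmetic with $\left\lceil\cdot\right\rceil$ must be checked in both parities of $m$ to ensure the inequality is never lost.
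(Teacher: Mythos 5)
Your proof is correct. It shares the broad strategy of the paper's proof --- an induction that peels pendant structure off the tree while tracking the internal-node count, with the tight case coming from a degree-2 vertex one step further in --- but the decomposition is genuinely different. The paper takes an \emph{arbitrary} leaf and cases on the degree of its neighbor: if that degree exceeds $2$ it deletes only the leaf edge (internal count unchanged, nothing added to the matching), and if the degree is exactly $2$ it deletes the leaf edge together with the neighbor's other edge and puts the leaf edge into the matching; accordingly its induction runs on the number of vertices. You instead use a longest-path argument to locate an internal vertex $u$ all of whose neighbors but at most one are leaves, delete the entire pendant star at $u$, and always add one pendant edge to the matching, casing on the degree of $u$'s unique non-leaf neighbor $w$. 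Your route costs some extra setup (the longest-path selection and the separate star case when $u$ has no non-leaf neighbor) but buys a clean induction directly on $m$ in which every step enlarges the matching and the internal count drops by exactly $1$ or $2$; the paper's route needs no careful choice of leaf but has a case that makes no progress on the matching, forcing the induction variable to be the vertex count instead. The ceiling arithmetic in the tight case is the same in both arguments, and your bookkeeping of $m'$ and the verification that $T'$ retains an edge are sound.
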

\begin{proof}
The proof is by induction. The base cases are the trees with $2$ and $3$ vertices. They have $0$ and $1$ internal nodes (i.e.\ non-leaves) respectively, and they each have an edge, which is a matching of size $1$.

Now assume that the number of vertices in tree $T$ is more than $3$, and the number of internal nodes in it is $m$. Take any leaf and its incident edge $e$. There are two cases.
\begin{enumerate}
\item The non-leaf vertex of $e$ has degree more than $2$. Then $T' = T \setminus \{e\}$ has the same number of internal nodes as $T$. By the inductive hypothesis,  $T'$ has a matching of size $\left\lceil\frac{m+1}{2}\right\rceil$, so $T$ does also.
\item The non-leaf vertex of $e$ has degree $2$. Let its other edge be denoted by $f$. Then the internal nodes in $T' = T \setminus \{e,f\}$ is the internal nodes in $T$ minus at most $2$. Thus $T'$ has a matching $M$ of size $\left\lceil\frac{m-1}{2}\right\rceil$. $M \cup \{e\}$ is a matching in $T$ with size $\left\lceil\frac{m+1}{2}\right\rceil$. 
\end{enumerate}
\qed
\end{proof}

We now show that $n \geq 4k - 2$ suffices to guarantee a rainbow matching.

\begin{theorem}\label{theo:lower-bound-vertices}
Let $k$ trees be given on $n$ vertices, $k \ge 5$, having no common leaves. Let $w$ be an arbitrary vertex. Then if the number of vertices are greater or equal than $4k - 2$, we can find a rainbow matching in the first $k-1$ trees avoiding $w$.
\end{theorem}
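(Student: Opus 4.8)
The plan is to reduce everything to Lemma~\ref{lem:greedy-rainbow} by producing, for the first $k-1$ colors, monochromatic matchings whose sizes can be arranged into the graded pattern $2, 4, \ldots, 2(k-1)$. Concretely, I would relabel the first $k-1$ trees so that the tree in slot $i$ contains a matching of size at least $2i$; then Lemma~\ref{lem:greedy-rainbow}, applied with $k-1$ in place of $k$ and with the excluded vertex taken to be $w$, immediately yields a rainbow matching of size $k-1$ in the first $k-1$ colors avoiding $w$. The whole task thus becomes a counting argument guaranteeing enough large matchings.

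First I would control the leaf counts. Write $\ell_i$ for the number of leaves of tree $i$ and $m_i = n - \ell_i$ for its number of internal vertices. The no-common-leaves hypothesis says every vertex is a leaf in at most one tree, so $\sum_{i=1}^k \ell_i \le n$; since each tree has at least two leaves, the excluded $k$th tree contributes at least $2$, giving $\sum_{i=1}^{k-1}\ell_i \le n-2$. Sorting the first $k-1$ trees so that $\ell_{(1)} \le \cdots \le \ell_{(k-1)}$, and using that the $k-j$ trees of ranks $j, \ldots, k-1$ each have at least $\ell_{(j)}$ leaves while the first $j-1$ each have at least $2$, I obtain the key bound
\[
\ell_{(j)} \le \frac{n-2j}{k-j}, \qquad j = 1, \ldots, k-1.
\]

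Next I would convert internal vertices into matchings via Lemma~\ref{lem:matching-internal}: the rank-$j$ tree has a matching of size at least $\left\lceil \frac{m_{(j)}+1}{2}\right\rceil = \left\lceil \frac{n-\ell_{(j)}+1}{2}\right\rceil$, and since the $j$ trees of smallest leaf count all have leaf count at most $\ell_{(j)}$, there are at least $j$ trees whose matching is this large; hence the $j$th largest matching among the first $k-1$ trees is at least $\left\lceil \frac{n-\ell_{(j)}+1}{2}\right\rceil$. Feeding Lemma~\ref{lem:greedy-rainbow} by the greedy assignment (largest matching to the slot needing $2(k-1)$, and so on), it suffices that this quantity is at least $2(k-j)$ for every $j$, i.e.\ that $\ell_{(j)} \le n + 2 - 4(k-j)$. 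Combining with the leaf bound, the entire argument reduces to
\[
\frac{n-2j}{k-j} \le n + 2 - 4k + 4j, \qquad j = 1, \ldots, k-1.
\]

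The main obstacle is that this inequality is \emph{tight}: equality holds at $n = 4k-2$, $j=1$, so both the ceiling in Lemma~\ref{lem:matching-internal} and the extra $+2$ saved from the excluded tree are genuinely needed and no slack may be wasted. I would clear denominators and check that at the extreme value $n = 4k-2$ the inequality rearranges to $4(j-1)\left(j - k + \frac{1}{2}\right) \le 0$, which holds for all $1 \le j \le k-1$; and since $k - j \ge 1$, increasing $n$ raises the right-hand side at least as fast as the left, so the inequality persists for every $n \ge 4k-2$. This verifies the hypotheses of Lemma~\ref{lem:greedy-rainbow} and completes the proof. (The hypothesis $k \ge 5$ is only the regime of interest here, as the cases $k \le 4$ are handled by the earlier, stronger results.)
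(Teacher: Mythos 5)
Your proposal is correct and follows essentially the same route as the paper's proof: order the $k-1$ trees by leaf count, bound $\ell_{(j)}$ via the no-common-leaves condition, convert internal-node counts to matchings with Lemma~\ref{lem:matching-internal}, and feed the graded sizes $2,4,\ldots,2(k-1)$ into Lemma~\ref{lem:greedy-rainbow}; your bound $\ell_{(j)} \le \frac{n-2j}{k-j}$ is exactly the paper's bound under the reversed indexing $i = k-j$. The only (cosmetic) difference is in verifying the final inequality, where you factor the quadratic at $n=4k-2$ and invoke monotonicity in $n$, while the paper solves the quadratic and bounds the discriminant using $k \ge 5$.
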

\begin{proof}
Arrange our $k-1$ trees in increasing order of number of internal nodes. We would 
like to show that the $i\th$ tree has a matching of size $2i$. This is sufficient to find a rainbow matching, according to Lemma~\ref{lem:greedy-rainbow}.

Since internal nodes are exactly the vertices of a tree which are not leaves, we have also arranged the trees in decreasing order of number of leaves. Each tree has at least $2$ leaves, therefore in the $k-1-i$ trees above the $i\th$ tree and in the $k\th$ tree there are altogether at least $2(k-i)$ leaves. Since no vertex is a leaf in more than one tree, there remain only at most $n - 2(k-i)$ vertices that might still be leaves in the the $i\th$ tree and the $i-1$ trees below. And since the number of leaves in the trees below is no less than in the $i\th$ tree, the $i\th$ tree contains at most
$$
\left\lfloor\frac{n - 2(k-i)}{i}\right\rfloor 
$$
leaves, and thus at least
$$
n-\left\lfloor\frac{n - 2(k-i)}{i}\right\rfloor = \left\lceil\frac{(i-1)n + 2(k-i)}{i}\right\rceil
$$
internal nodes. If $n \ge 4k - 2$, this means at least
$$
\left\lceil\frac{(i-1)(4k -2) + 2(k-i)}{i}\right\rceil = \left\lceil\frac{4ki - 2k - 4i +2}{i}\right\rceil = 4k - 4 - \left\lfloor\frac{2k-2}{i}\right\rfloor
$$
internal nodes. According to Lemma~\ref{lem:matching-internal}, there is a matching of a given lowerly bounded size that must exist in the $i\th$ tree, and we are going to show that
\begin{equation}
\left\lceil\frac{4k - 4 - \left\lfloor\frac{2k-2}{i}\right\rfloor+1}{2}\right\rceil \ge 2i.\label{eq:inequality}
\end{equation}
When $i = k-1$, the left hand side is
$$
\left\lceil\frac{4k-4-2+1}{2}\right\rceil =2(k-1) =2i.
$$
For $i<k-1$, it is sufficient to show that
$$
\frac{4k - 3 - \frac{2k-2}{i}}{2} \ge 2i.
$$
After rearranging, we get that
$$
0 \ge 4i^2 - (4k-3)i + 2k -2 
$$
Solving the second order equation, we get that
$$
\frac{4k-3-\sqrt{(4k-7)^2-8}}{8} \le i \le \frac{4k-3+\sqrt{(4k-7)^2-8}}{8}.
$$
Rounding the discriminant knowing that $k \ge 5$, we get that
$$
\frac{4k-3-(4k-8)}{8} \le i \le \frac{4k-3+4k-8}{8}.
$$
namely,
$$
\frac{5}{8} \le i \le k - \frac{11}{8}
$$
which holds since $1\le i \le k-2$. Therfore, in the $i\th$ tree there is a matching of size at least $2i$, which is sufficient to have the prescribed rainbow matching.
\qed
\end{proof}

\subsection{Edge-disjoint realizations under a condition on the degree distribution}

Theorem~\ref{theo:lower-bound-vertices} is not strong enough to prove the full theorem of edge-disjoint realizations, since in our inductive proof we need to find rainbow matchings at each inductive step, starting from $n = 2k$. But by adding an extra condition to the degree distribution, and showing that this condition is maintained throughout the induction process, we are successfully able to guarantee edge-disjoint realizations.

Define a \emph{never-leaf} to be a vertex that is not a leaf in any tree.

\begin{theorem}
$k$ tree degree sequences without common leaves and with at least $2k-4$ never-leaves always have edge-disjoint realizations.
\end{theorem}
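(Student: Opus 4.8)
The plan is to prove this theorem by induction on $n$, mirroring the structure of the proof for $4$ tree degree sequences but using Theorem~\ref{theo:lower-bound-vertices} (or a suitable analog) to supply the rainbow matching in the inductive step, and crucially tracking the never-leaf count through the reduction. First I would establish the base case. Since there are $k$ trees without common leaves, each vertex has total degree at least $2k-1$, and together with the $2k-4$ never-leaf condition and the tree-sum constraint $\sum_i d_v^{(i)} \le n-1$ per vertex (proven earlier), I expect the minimum number of vertices to be forced up to roughly $n \ge 4k-2$, at which point Theorem~\ref{theo:lower-bound-vertices} applies directly. The key observation is that the never-leaf hypothesis, combined with counting, should guarantee that the base case already sits in the regime where rainbow matchings are available, so there is no gap to bridge of the kind that forced the separate small-case analysis for $k=4$.

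The inductive step proceeds as in the $k=4$ theorem. Assuming the result for fewer vertices, and assuming not all sequences are paths, I would invoke Lemma~\ref{lem:reduction} to find vertices $v,w$ and an index $i$ with $d_v^{(i)}=1$, $d_v^{(j)}=2$ for $j\ne i$, and $d_w^{(i)}>2$. I then delete $v$ and subtract one from $d_w^{(i)}$ to obtain smaller tree degree sequences $D_1',\ldots,D_k'$ without common leaves. The realization $H$ of the reduced sequences exists by induction; permuting so that color $i$ is the last, I find a rainbow matching of size $k-1$ in the first $k-1$ trees of $H$ avoiding $w$, using Theorem~\ref{theo:lower-bound-vertices}. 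I reinsert $v$, connect it to $w$ in tree $i$, delete the $k-1$ matching edges, and reconnect $v$ to their endpoints (two edges per color), exactly as in the $k=4$ construction, yielding edge-disjoint realizations of the original sequences.

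The main obstacle, and the step requiring genuine care, is showing that the never-leaf condition is \emph{preserved} under the reduction, and that enough never-leaves remain to keep $n$ in the regime where Theorem~\ref{theo:lower-bound-vertices} fires at every step. Deleting $v$ removes one vertex, and the degree reduction at $w$ could in principle turn a non-leaf $w$ into a leaf in tree $i$, potentially destroying a never-leaf or creating a new common leaf. I would argue that because $d_w^{(i)}>2$ before the subtraction, $w$ retains degree at least $2$ in tree $i$ afterward, so $w$ does not become a leaf and the never-leaf status of every surviving never-leaf is unharmed; the vertex $v$ being deleted is itself a leaf (in tree $i$) and hence not a never-leaf, so the never-leaf count drops by at most the loss of $w$ as a candidate, but $w$ was never a leaf. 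Thus the count of never-leaves is maintained at $\ge 2k-4$, and the inequality $n \ge 4k-2$ needed for the rainbow matching continues to hold throughout. Verifying these bookkeeping inequalities precisely — that $2k-4$ never-leaves plus the no-common-leaves constraint indeed force $n$ large enough for Theorem~\ref{theo:lower-bound-vertices} at both the base and each inductive stage — is the crux of the argument.
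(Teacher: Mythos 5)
Your inductive skeleton matches the paper's (reduce via Lemma~\ref{lem:reduction}, observe that the reduction creates no new leaves so the set of never-leaves is unchanged, then lift back using a rainbow matching), and your argument that $v$ is not a never-leaf and that $w$ keeps degree at least $2$ in tree $i$ is exactly the paper's preservation argument. However, the step you yourself flag as the crux --- that $2k-4$ never-leaves force $n\ge 4k-2$ so that Theorem~\ref{theo:lower-bound-vertices} fires at every stage --- is where the proposal breaks. The correct count is: $2k-4$ never-leaves plus $k$ pairwise disjoint leaf sets of size at least $2$ give only $n\ge 4k-4$. So the instances with $n=4k-4$ and $n=4k-3$ are not reached by Theorem~\ref{theo:lower-bound-vertices}, and worse, the inductive steps at $n=4k-3$ and $n=4k-2$ require a rainbow matching in a reduced realization on $4k-4$, respectively $4k-3$, vertices, which that theorem cannot supply (it also assumes $k\ge 5$, while the statement here is for general $k$). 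The induction does bottom out gracefully --- at $n=4k-4$ every tree has exactly two leaves and hence is a path, so Lemma~\ref{lem:hamilton} applies --- but the two lifting steps just above the bottom are genuinely uncovered by your argument.

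The paper closes this gap by not using Theorem~\ref{theo:lower-bound-vertices} at all. Instead it counts internal nodes directly: in any fixed tree, the $2k-4$ never-leaves are internal, and the at least $2(k-1)$ leaves of the other $k-1$ trees (pairwise disjoint, by the no-common-leaves condition) are also internal in this tree, giving at least $4k-6$ internal nodes. Lemma~\ref{lem:matching-internal} then yields a matching of size $\left\lceil\frac{4k-5}{2}\right\rceil=2k-2$ in every color, and Lemma~\ref{lem:greedy-rainbow} converts these into the required rainbow matching avoiding $w$, with no hypothesis on $n$ whatsoever. If you replace your appeal to Theorem~\ref{theo:lower-bound-vertices} with this internal-node count, the rest of your write-up goes through.
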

\begin{proof}
We will use the same inductive proof as presented originally. The crucial observation about that proof is that nowhere during the inductive step do we create any new leaves in any tree. This means the number of never-leaves does not change during the inductive step, and so at each step we have at least $2k-4$ never-leaves.

It only remains to be shown, then, that whenever we have $2k-4$ never-leaves we can find a rainbow matching. We claim that in each tree there are at least $4k-6$ internal nodes. Indeed, the $2k-4$ never-leaves are certainly internal nodes in this tree. And in each of the other $k-1$ trees there are at least two leaves, and these leaves are internal nodes in all other trees because no common leaves, giving an additional $2k-2$ internal nodes, altogether $4k-6$ internal nodes. By Lemma~\ref{lem:matching-internal} this means we have matchings of size at least 
$$
\left\lceil\frac{4k-5}{2}\right\rceil = 2k-2
$$
in each tree, and by Lemma~\ref{lem:greedy-rainbow} these guarantee a rainbow matching, and we are done.
\qed
\end{proof}

\subsection{A conditional theorem and the $k = 5$ case}

The consequence of Theorem~\ref{theo:lower-bound-vertices} is the following.
\begin{theorem}\label{theo:conditional}
Fix a $k$. If all tree degree sequence $k$-tuples without common leaves on at most $4k-2$ vertices have edge disjoint tree realizations, then any tree degree sequence  $k$-tuples without common leaves have edge disjoint tree realizations.
\end{theorem}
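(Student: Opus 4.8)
The plan is to run exactly the same vertex-deletion induction as in the four-tree proof, but with the base-case threshold raised to $4k-2$ vertices and with Theorem~\ref{theo:lower-bound-vertices} supplying the rainbow matching at each inductive step. The induction is on the number of vertices $n$. The hypothesis of the theorem hands me the base case for free: every $k$-tuple on at most $4k-2$ vertices is realizable. I would also dispose of the all-paths case separately at the outset, using Lemma~\ref{lem:hamilton}, which holds for arbitrary $k$ and every $n$; this is necessary because Lemma~\ref{lem:reduction} applies only when not all of the sequences are paths.

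For the inductive step I would take tree degree sequences $D_1,\ldots,D_k$ without common leaves on $n>4k-2$ vertices, not all paths. By Lemma~\ref{lem:reduction} there are vertices $v$, $w$ and an index $i$ with $d_v^{(i)}=1$, $d_v^{(j)}=2$ for all $j\ne i$, and $d_w^{(i)}>2$. I form the reduced sequences $D_1',\ldots,D_k'$ by deleting $v$ and subtracting one from $d_w^{(i)}$. A short degree count confirms these are again tree degree sequences, now on $n-1$ vertices: color $i$ loses the unit at $v$ and the unit at $w$, while every other color loses the two units at $v$, so each row sum drops from $2n-2$ to $2(n-1)-2$. No common leaf is created, since $w$ still has degree greater than one in color $i$ and no other degree changes. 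Because $n-1\ge 4k-2$, the reduced tuple is realizable, either by the base case or by the inductive hypothesis; let $H$ be a colored realization on $n-1$ vertices.

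Next I would permute the colors so that color $i$ becomes the $k$th, and apply Theorem~\ref{theo:lower-bound-vertices} to $H$ with distinguished vertex $w$: since $H$ has $n-1\ge 4k-2$ vertices and $k\ge 5$, there is a rainbow matching of size $k-1$ in the first $k-1$ colors that avoids $w$. (For $k\le 4$ the conjecture is already settled, so the restriction $k\ge 5$ inherited from Theorem~\ref{theo:lower-bound-vertices} costs nothing.) I then reinsert $v$: join $v$ to $w$ in color $k$, delete the $k-1$ matching edges, and for each deleted edge join $v$ to both of its endpoints in that edge's color. This restores $d_w^{(i)}$, gives $v$ degree two in each of the first $k-1$ colors and degree one in color $k$ (total degree $2(k-1)+1=2k-1$, exactly as required), and leaves every matched endpoint's degree unchanged, producing an edge-disjoint realization of the original tuple.

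The step carrying the real weight is already done for me: it is the guarantee of the rainbow matching, which is precisely the content of Theorem~\ref{theo:lower-bound-vertices}. Consequently the only points requiring care are bookkeeping ones --- verifying that the reduced tuple stays a tree degree sequence tuple without common leaves, and that the vertex count $n-1$ never drops below the $4k-2$ threshold at which either the hypothesis or the induction applies. The fact that $n>4k-2$ is equivalent to $n-1\ge 4k-2$ is exactly what makes the induction close cleanly.
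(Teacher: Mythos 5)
Your proposal is correct and follows essentially the same route as the paper's own proof: induction on $n$ with the hypothesis and Lemma~\ref{lem:hamilton} as base cases, Lemma~\ref{lem:reduction} to delete a vertex $v$ and decrement $d_w^{(i)}$, and Theorem~\ref{theo:lower-bound-vertices} to supply the rainbow matching avoiding $w$ for the reinsertion step. Your extra bookkeeping (verifying the reduced tuple is a tree degree sequence tuple without common leaves, and noting that the $k\ge 5$ restriction is harmless since $k\le 4$ is already settled) only makes explicit what the paper leaves implicit.
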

\begin{proof}
The proof is by induction. The base cases are the path degree sequences, which have edge disjoint realizations, according to Lemma~\ref{lem:hamilton}, and the degree sequences on at most $4k-2$, which have edge disjoint realizations by the condition of the theorem.

Let $D_1, D_2, \ldots, D_k$ be tree degree sequences without common leaves on more than $4k-2$ vertices. By Lemma~\ref{lem:reduction}, there are vertices $v$, $w$ and index $i$, such that $d_v^{(i)} = 1$, for all $j \ne i$, $d_v^{(j)} =2$ and $d_w^{(i)} >2$. Construct the degree sequences $D_1', D_2', \ldots, D_k'$ by removing $v$ and subtracting $1$ from $d_w^{(i)}$. These are tree degree sequences on at least $4k-2$ vertices, and they have edge disjoint realizations $T_1', T_2',\ldots, T_k'$ by the inductive hypothesis. Furthermore, there is a rainbow matching on all the trees except the $i\th$ avoiding vertex $w$, according to Theorem~\ref{theo:lower-bound-vertices}. Construct a realization of $D_1, D_2, \ldots, D_k$ in the following way. Start with $T_1', T_2', \ldots T_k'$. Add vertex $v$, connect it to $w$ in $T_i'$. Delete the edges in the rainbow matching, and connect $v$ to their $2k-2$ vertices, two edges in each tree, according the color of the deleted edge.
\qed
\end{proof}

When $k = 5$, Theorem~\ref{theo:conditional} says the following: if all tree degree sequence quintets without common leaves and on at most $18$ vertices have edge disjoint tree realizations, then all tree degree sequence quintets have edge disjoint tree realizations. A computer-aided search showed that up to permutation of sequences and vertices, there are at most $592000$ tree degree quintets without common leaves and on at most 18 vertices, and they all have edge disjoint tree realizations.

\section*{Appendix}

Up to permutations of degree sequences and vertices, there are $14$ tree degree sequence quartets on at most  $10$ vertices without common leaves. This appendix gives an example realization for all of them.

If the number of vertices is $8$, there is only one possible degree sequence quartet, each degree sequence is a path degree sequence (case 1).

If the number of vertices is $9$, there are $2$ possible cases: either all degree squences are path degree sequences (case 2) or there is a degree $3$ (case 3).

If the number of vertices is $10$, there are $11$ possible cases. All degree sequences are path degree sequences (case 4), there is a degree 3 which might be on a vertex with a leaf (case 5) or without a leaf (case 6), there is a degree $4$ (case 7) or there are $2$ degree  $3$s in the degree sequences (cases 8-14).

The two $3$s might be in the same degree sequence, and the leaves on these two vertices might be in the same degree sequence (case 8) or in different degree sequences (case 9).

If the two degree $3$s are in different degree sequences, they might be on the same vertex (case 10) or on different vertices.

If the two degree $3$s are in different sequences, $D_i$ and $D_j$, and on different vertices $u$ and $v$, consider the degrees of $u$ and $v$ in $D_i$ and $D_j$ which are not $3$. They might be both $1$ (case 11), or else maybe one of them is $1$ and the other is $2$ (case 12), or else both of them are $2$. In this latter case, the degree $1$s on $u$ and $v$ might be in the same degree sequence (case 13) or in different degree sequences (case 14).

The realizations are represented with an adjacency matrix, in which $0$ denotes the absence of edges, and for each degree sequence $D_i$, $i$ denotes the edges in the realization of $D_i$.

\begin{enumerate}
\item  
\begin{eqnarray}
D_1& = &1, 2, 2, 2, 1, 2, 2, 2 \nonumber\\
D_2& = &2, 1, 2, 2, 2, 1, 2, 2 \nonumber\\
D_3& = &2, 2, 1, 2, 2, 2, 1, 2 \nonumber\\
D_4& = &2, 2, 2, 1, 2, 2, 2, 1 \nonumber
\end{eqnarray}
$$
\left(
\begin{array}{cccccccc}
0& 1& 2& 2& 3& 3& 4& 4 \\
1& 0& 2& 3& 3& 4& 4& 1 \\
2& 2& 0& 3& 4& 4& 1& 1 \\
2& 3& 3& 0& 4& 1& 1& 2 \\
3& 3& 4& 4& 0& 1& 2& 2 \\
3& 4& 4& 1& 1& 0& 2& 3 \\
4& 4& 1& 1& 2& 2& 0& 3 \\
4& 1& 1& 2& 2& 3& 3& 0 
\end{array}
\right)
$$

\item 
\begin{eqnarray}
D_1& = &1, 2, 2, 2, 2, 1, 2, 2, 2 \nonumber\\
D_2& = &2, 1, 2, 2, 2, 2, 1, 2, 2 \nonumber\\
D_3& = &2, 2, 1, 2, 2, 2, 2, 1, 2 \nonumber\\
D_4& = &2, 2, 2, 1, 2, 2, 2, 2, 1 \nonumber
\end{eqnarray}
$$
\left(
\begin{array}{ccccccccc}
0& 1& 2& 2& 3& 3& 4& 4& 0 \\
1& 0& 2& 3& 3& 4& 4& 0& 1 \\
2& 2& 0& 3& 4& 4& 0& 1& 1 \\
2& 3& 3& 0& 4& 0& 1& 1& 2 \\
3& 3& 4& 4& 0& 1& 1& 2& 2 \\
3& 4& 4& 0& 1& 0& 2& 2& 3 \\
4& 4& 0& 1& 1& 2& 0& 3& 3 \\
4& 0& 1& 1& 2& 2& 3& 0& 4 \\
0& 1& 1& 2& 2& 3& 3& 4& 0 
\end{array}
\right)
$$

\item 
\begin{eqnarray}
D_1& = &1, 3, 2, 2, 1, 2, 2, 2, 1 \nonumber\\
D_2& = &2, 1, 2, 2, 2, 1, 2, 2, 2 \nonumber\\
D_3& = &2, 2, 1, 2, 2, 2, 1, 2, 2 \nonumber\\
D_4& = &2, 2, 2, 1, 2, 2, 2, 1, 2 \nonumber
\end{eqnarray}
$$
\left(
\begin{array}{ccccccccc}
0& 1& 0& 2& 3& 3& 4& 4& 2 \\
1& 0& 2& 3& 3& 4& 4& 1& 1 \\
0& 2& 0& 3& 4& 4& 1& 1& 2 \\
2& 3& 3& 0& 0& 1& 1& 2& 4 \\
3& 3& 4& 0& 0& 1& 2& 2& 4 \\
3& 4& 4& 1& 1& 0& 2& 0& 3 \\
4& 4& 1& 1& 2& 2& 0& 3& 0 \\
4& 1& 1& 2& 2& 0& 3& 0& 3 \\
2& 1& 2& 4& 4& 3& 0& 3& 0 
\end{array}
\right)
$$

\item 
\begin{eqnarray}
D_1& = &1, 2, 2, 2, 2, 1, 2, 2, 2, 2 \nonumber\\
D_2& = &2, 1, 2, 2, 2, 2, 1, 2, 2, 2 \nonumber\\
D_3& = &2, 2, 1, 2, 2, 2, 2, 1, 2, 2 \nonumber\\
D_4& = &2, 2, 2, 1, 2, 2, 2, 2, 1, 2 \nonumber
\end{eqnarray}
$$
\left(
\begin{array}{cccccccccc}
0& 1& 2& 2& 3& 3& 4& 4& 0& 0 \\
1& 0& 2& 3& 3& 4& 4& 0& 0& 1 \\
2& 2& 0& 3& 4& 4& 0& 0& 1& 1 \\
2& 3& 3& 0& 4& 0& 0& 1& 1& 2 \\
3& 3& 4& 4& 0& 0& 1& 1& 2& 2 \\
3& 4& 4& 0& 0& 0& 1& 2& 2& 3 \\
4& 4& 0& 0& 1& 1& 0& 2& 3& 3 \\
4& 0& 0& 1& 1& 2& 2& 0& 3& 4 \\
0& 0& 1& 1& 2& 2& 3& 3& 0& 4 \\
0& 1& 1& 2& 2& 3& 3& 4& 4& 0 
\end{array}
\right)
$$

\item 
\begin{eqnarray}
D_1& = &1, 3, 2, 2, 2, 1, 2, 2, 2, 1 \nonumber\\
D_2& = &2, 1, 2, 2, 2, 2, 1, 2, 2, 2 \nonumber\\
D_3& = &2, 2, 1, 2, 2, 2, 2, 1, 2, 2 \nonumber\\
D_4& = &2, 2, 2, 1, 2, 2, 2, 2, 1, 2 \nonumber
\end{eqnarray}
$$
\left(
\begin{array}{cccccccccc}
0& 1& 0& 2& 3& 3& 4& 4& 0& 2 \\
1& 0& 2& 3& 3& 4& 4& 0& 1& 1 \\
0& 2& 0& 3& 4& 4& 0& 1& 1& 2 \\
2& 3& 3& 0& 0& 0& 1& 1& 2& 4 \\
3& 3& 4& 0& 0& 1& 1& 2& 2& 4 \\
3& 4& 4& 0& 1& 0& 2& 2& 3& 0 \\
4& 4& 0& 1& 1& 2& 0& 0& 3& 3 \\
4& 0& 1& 1& 2& 2& 0& 0& 4& 3 \\
0& 1& 1& 2& 2& 3& 3& 4& 0& 0 \\
2& 1& 2& 4& 4& 0& 3& 3& 0& 0 
\end{array}
\right)
$$

\item 
\begin{eqnarray}
D_1& = &1, 2, 2, 2, 3, 1, 2, 2, 2, 1 \nonumber\\
D_2& = &2, 1, 2, 2, 2, 2, 1, 2, 2, 2 \nonumber\\
D_3& = &2, 2, 1, 2, 2, 2, 2, 1, 2, 2 \nonumber\\
D_4& = &2, 2, 2, 1, 2, 2, 2, 2, 1, 2 \nonumber
\end{eqnarray}
$$
\left(
\begin{array}{cccccccccc}
0& 1& 0& 2& 3& 3& 4& 4& 0& 2 \\
1& 0& 2& 0& 3& 4& 4& 0& 1& 3 \\
0& 2& 0& 3& 4& 4& 0& 1& 1& 2 \\
2& 0& 3& 0& 4& 0& 1& 1& 2& 3 \\
3& 3& 4& 4& 0& 1& 1& 2& 2& 1 \\
3& 4& 4& 0& 1& 0& 2& 2& 3& 0 \\
4& 4& 0& 1& 1& 2& 0& 3& 3& 0 \\
4& 0& 1& 1& 2& 2& 3& 0& 0& 4 \\
0& 1& 1& 2& 2& 3& 3& 0& 0& 4 \\
2& 3& 2& 3& 1& 0& 0& 4& 4& 0 
\end{array}
\right)
$$

\item 
\begin{eqnarray}
D_1& = &1, 4, 2, 2, 1, 2, 2, 2, 1, 1 \nonumber\\
D_2& = &2, 1, 2, 2, 2, 1, 2, 2, 2, 2 \nonumber\\
D_3& = &2, 2, 1, 2, 2, 2, 1, 2, 2, 2 \nonumber\\
D_4& = &2, 2, 2, 1, 2, 2, 2, 1, 2, 2 \nonumber
\end{eqnarray}
$$
\left(
\begin{array}{cccccccccc}
0& 1& 0& 0& 3& 3& 4& 4& 2& 2 \\
1& 0& 2& 3& 3& 4& 4& 1& 1& 1 \\
0& 2& 0& 3& 0& 4& 1& 1& 2& 4 \\
0& 3& 3& 0& 0& 1& 1& 2& 4& 2 \\
3& 3& 0& 0& 0& 1& 2& 2& 4& 4 \\
3& 4& 4& 1& 1& 0& 2& 0& 3& 0 \\
4& 4& 1& 1& 2& 2& 0& 0& 0& 3 \\
4& 1& 1& 2& 2& 0& 0& 0& 3& 3 \\
2& 1& 2& 4& 4& 3& 0& 3& 0& 0 \\
2& 1& 4& 2& 4& 0& 3& 3& 0& 0 
\end{array}
\right)
$$

\item 
\begin{eqnarray}
D_1& = &1, 3, 2, 2, 1, 3, 2, 2, 1, 1 \nonumber\\
D_2& = &2, 1, 2, 2, 2, 1, 2, 2, 2, 2 \nonumber\\
D_3& = &2, 2, 1, 2, 2, 2, 1, 2, 2, 2 \nonumber\\
D_4& = &2, 2, 2, 1, 2, 2, 2, 1, 2, 2 \nonumber
\end{eqnarray}
$$
\left(
\begin{array}{cccccccccc}
0& 1& 0& 2& 0& 3& 4& 4& 2& 3 \\
1& 0& 0& 3& 3& 4& 4& 1& 1& 2 \\
0& 0& 0& 3& 4& 4& 1& 1& 2& 2 \\
2& 3& 3& 0& 0& 1& 1& 2& 0& 4 \\
0& 3& 4& 0& 0& 1& 2& 2& 4& 3 \\
3& 4& 4& 1& 1& 0& 2& 0& 3& 1 \\
4& 4& 1& 1& 2& 2& 0& 3& 0& 0 \\
4& 1& 1& 2& 2& 0& 3& 0& 3& 0 \\
2& 1& 2& 0& 4& 3& 0& 3& 0& 4 \\
3& 2& 2& 4& 3& 1& 0& 0& 4& 0 
\end{array}
\right)
$$

\item 
\begin{eqnarray}
D_1& = &1, 3, 3, 2, 1, 2, 2, 2, 1, 1 \nonumber\\
D_2& = &2, 1, 2, 2, 2, 1, 2, 2, 2, 2 \nonumber\\
D_3& = &2, 2, 1, 2, 2, 2, 1, 2, 2, 2 \nonumber\\
D_4& = &2, 2, 2, 1, 2, 2, 2, 1, 2, 2 \nonumber
\end{eqnarray}
$$
\left(
\begin{array}{cccccccccc}
0& 1& 0& 0& 3& 3& 4& 4& 2& 2 \\
1& 0& 2& 3& 3& 0& 4& 1& 1& 4 \\
0& 2& 0& 3& 4& 4& 1& 1& 2& 1 \\
0& 3& 3& 0& 0& 1& 1& 2& 4& 2 \\
3& 3& 4& 0& 0& 1& 2& 2& 4& 0 \\
3& 0& 4& 1& 1& 0& 2& 0& 3& 4 \\
4& 4& 1& 1& 2& 2& 0& 0& 0& 3 \\
4& 1& 1& 2& 2& 0& 0& 0& 3& 3 \\
2& 1& 2& 4& 4& 3& 0& 3& 0& 0 \\
2& 4& 1& 2& 0& 4& 3& 3& 0& 0 
\end{array}
\right)
$$

\item 
\begin{eqnarray}
D_1& = &1, 3, 2, 2, 1, 2, 2, 2, 1, 2 \nonumber\\
D_2& = &2, 1, 2, 2, 2, 1, 2, 2, 2, 2 \nonumber\\
D_3& = &2, 3, 1, 2, 2, 2, 1, 2, 2, 1 \nonumber\\
D_4& = &2, 2, 2, 1, 2, 2, 2, 1, 2, 2 \nonumber
\end{eqnarray}
$$
\left(
\begin{array}{cccccccccc}
0& 1& 0& 2& 3& 3& 4& 0& 2& 4 \\
1& 0& 2& 3& 3& 4& 4& 1& 1& 3 \\
0& 2& 0& 3& 4& 4& 1& 1& 2& 0 \\
2& 3& 3& 0& 0& 0& 1& 2& 4& 1 \\
3& 3& 4& 0& 0& 1& 0& 2& 4& 2 \\
3& 4& 4& 0& 1& 0& 2& 0& 3& 1 \\
4& 4& 1& 1& 0& 2& 0& 3& 0& 2 \\
0& 1& 1& 2& 2& 0& 3& 0& 3& 4 \\
2& 1& 2& 4& 4& 3& 0& 3& 0& 0 \\
4& 3& 0& 1& 2& 1& 2& 4& 0& 0 
\end{array}
\right)
$$

\item 
\begin{eqnarray}
D_1& = &1, 3, 2, 2, 1, 2, 2, 2, 1, 2 \nonumber\\
D_2& = &3, 1, 2, 2, 2, 1, 2, 2, 2, 1 \nonumber\\
D_3& = &2, 2, 1, 2, 2, 2, 1, 2, 2, 2 \nonumber\\
D_4& = &2, 2, 2, 1, 2, 2, 2, 1, 2, 2 \nonumber
\end{eqnarray}
$$
\left(
\begin{array}{cccccccccc}
0& 1& 0& 2& 3& 3& 4& 4& 2& 2 \\
1& 0& 2& 3& 3& 4& 4& 1& 1& 0 \\
0& 2& 0& 3& 0& 4& 1& 1& 2& 4 \\
2& 3& 3& 0& 0& 0& 1& 2& 4& 1 \\
3& 3& 0& 0& 0& 1& 2& 2& 4& 4 \\
3& 4& 4& 0& 1& 0& 2& 0& 3& 1 \\
4& 4& 1& 1& 2& 2& 0& 0& 0& 3 \\
4& 1& 1& 2& 2& 0& 0& 0& 3& 3 \\
2& 1& 2& 4& 4& 3& 0& 3& 0& 0 \\
2& 0& 4& 1& 4& 1& 3& 3& 0& 0 
\end{array}
\right)
$$

\item 
\begin{eqnarray}
D_1& = &1, 3, 2, 2, 1, 2, 2, 2, 1, 2 \nonumber\\
D_2& = &2, 1, 3, 2, 2, 1, 2, 2, 2, 1 \nonumber\\
D_3& = &2, 2, 1, 2, 2, 2, 1, 2, 2, 2 \nonumber\\
D_4& = &2, 2, 2, 1, 2, 2, 2, 1, 2, 2 \nonumber
\end{eqnarray}
$$
\left(
\begin{array}{cccccccccc}
0& 0& 0& 2& 3& 3& 4& 4& 2& 1 \\
0& 0& 2& 3& 3& 4& 4& 1& 1& 1 \\
0& 2& 0& 3& 4& 4& 1& 1& 2& 2 \\
2& 3& 3& 0& 0& 1& 1& 2& 0& 4 \\
3& 3& 4& 0& 0& 1& 2& 2& 4& 0 \\
3& 4& 4& 1& 1& 0& 2& 0& 3& 0 \\
4& 4& 1& 1& 2& 2& 0& 0& 0& 3 \\
4& 1& 1& 2& 2& 0& 0& 0& 3& 3 \\
2& 1& 2& 0& 4& 3& 0& 3& 0& 4 \\
1& 1& 2& 4& 0& 0& 3& 3& 4& 0 
\end{array}
\right)
$$

\item
\begin{eqnarray}
D_1& = &1, 3, 2, 2, 1, 2, 2, 2, 1, 2 \nonumber\\
D_2& = &2, 1, 2, 2, 2, 1, 2, 2, 2, 2 \nonumber\\
D_3& = &2, 2, 1, 2, 2, 3, 1, 2, 2, 1 \nonumber\\
D_4& = &2, 2, 2, 1, 2, 2, 2, 1, 2, 2 \nonumber
\end{eqnarray}
$$
\left(
\begin{array}{cccccccccc}
0& 0& 0& 2& 3& 3& 4& 4& 2& 1 \\
0& 0& 2& 3& 3& 4& 4& 1& 1& 1 \\
0& 2& 0& 3& 4& 4& 1& 1& 2& 0 \\
2& 3& 3& 0& 0& 1& 1& 2& 0& 4 \\
3& 3& 4& 0& 0& 1& 0& 2& 4& 2 \\
3& 4& 4& 1& 1& 0& 2& 0& 3& 3 \\
4& 4& 1& 1& 0& 2& 0& 3& 0& 2 \\
4& 1& 1& 2& 2& 0& 3& 0& 3& 0 \\
2& 1& 2& 0& 4& 3& 0& 3& 0& 4 \\
1& 1& 0& 4& 2& 3& 2& 0& 4& 0 
\end{array}
\right)
$$

\item 
\begin{eqnarray}
D_1& = &1, 3, 2, 2, 1, 2, 2, 2, 1, 2 \nonumber\\
D_2& = &2, 1, 2, 2, 2, 1, 2, 2, 2, 2 \nonumber\\
D_3& = &2, 2, 1, 3, 2, 2, 1, 2, 2, 1 \nonumber\\
D_4& = &2, 2, 2, 1, 2, 2, 2, 1, 2, 2 \nonumber
\end{eqnarray}
$$
\left(
\begin{array}{cccccccccc}
0& 0& 0& 2& 3& 3& 4& 4& 2& 1 \\
0& 0& 2& 3& 3& 4& 4& 1& 1& 1 \\
0& 2& 0& 3& 4& 0& 1& 1& 2& 4 \\
2& 3& 3& 0& 0& 1& 1& 2& 4& 3 \\
3& 3& 4& 0& 0& 1& 0& 2& 4& 2 \\
3& 4& 0& 1& 1& 0& 2& 0& 3& 4 \\
4& 4& 1& 1& 0& 2& 0& 3& 0& 2 \\
4& 1& 1& 2& 2& 0& 3& 0& 3& 0 \\
2& 1& 2& 4& 4& 3& 0& 3& 0& 0 \\
1& 1& 4& 3& 2& 4& 2& 0& 0& 0 
\end{array}
\right)
$$

\end{enumerate}

\begin{acknowledgements}
IM is supported by NKFIH Funds No. K116769 and No. SNN-117879.
\end{acknowledgements}


\begin{thebibliography}{}
%

\bibitem{bentzetal2009}
Bentz, C., Costa, M.-C., Picouleau, C., Ries, B., de Werra, D.: 
Degree-constrained edge partitioning in graphs arising from discrete tomography,
J. Graph Algorithms Appl., 13(2):99--118 (2009)

\bibitem{bushetal2012} 
Busch, A., Ferrara, M., Hartke, S., Jacobson, M., Kaul, H., West, D.:
Packing of graphic n-tuples, J. Graph Theory 70(1):29--39 (2012)

\bibitem{chen1988}
Chen, Y-C.:
A short proof of kundu's k-factor theorem,
Discrete Math., 71(2):177--179, (1988)

\bibitem{eg1960} Erd{\H o}s, P.; Gallai, T. : Graphs with vertices of prescribed degrees  (in Hungarian) Matematikai Lapok, 11: 264--274. (1960)

\bibitem{guinezetal2011} 
Gu\'{\i}{\~n}ez, F., Matamala, F.M., Thomass\'e, S.: 
Realizing disjoint degree sequences of span at most two: A tractable discrete tomography problem,
Discrete Appl. Math., 159(1):23--30 (2011) .

\bibitem{hmcd2016} Hillebrand, A., McDiarmid, C.: 
Colour degree matrices of graphs with at most one cycle,
Discrete Appl. Math., 209:144--152, (2016)

\bibitem{kundureg}
Kundu, S.: 
The k-factor conjecture is true,
Discrete Math. 6(4):367--376, (1973)

\bibitem{kundutree} 
Kundu, S.: 
Disjoint Representation of Tree Realizable Sequences.
SIAM Journal on Applied Mathematics,
26(1):103--107. (1974)

\bibitem{kundu3tree}
Kundu, S.:
Disjoint representation of three tree realizable sequences, 
SIAM J. of Appl. Math., 28:290--302, (1975)

\end{thebibliography}


\end{document}